\begin{document}

\title[Betti diagrams of multigraded artinian modules]{The linear
space of Betti diagrams of 
multigraded artinian modules}

\author{Gunnar Fl{\o}ystad}
\address{Matematisk Institutt\\
         University of Bergen \\
         Johs. Brunsgt. 12\\
         5008 Bergen \\
         Norway}
\email{gunnar@mi.uib.no}

\keywords{keywords}
\subjclass[2000]{Primary: 13D02; Secondary: 13C14.}
\date{\today}

% Option 5
% Theorems, corollaries, lemmas, and propositions, in the most 
% emphatic (plain) style; all are numbered separately.
% There is a Main Theorem in the most emphatic (plain) 
% style, unnumbered. There are definitions, in the less emphatic
% (definition) style. There are notations, in the least emphatic
%(remark) style, unnumbered.

\theoremstyle{plain}
\newtheorem{theorem}{Theorem}[section]
\newtheorem{corollary}[theorem]{Corollary}
\newtheorem*{main}{Main Theorem}
\newtheorem{lemma}[theorem]{Lemma}
\newtheorem{proposition}[theorem]{Proposition}
\newtheorem{conjecture}[theorem]{Conjecture}
\newtheorem*{theoremA}{Theorem}
\newtheorem*{theoremB}{Theorem}

\theoremstyle{definition}
\newtheorem{definition}[theorem]{Definition}
\newtheorem{question}[theorem]{Question}

\theoremstyle{remark}
\newtheorem{notation}[theorem]{Notation}
\newtheorem{remark}[theorem]{Remark}
\newtheorem{example}[theorem]{Example}
\newtheorem{claim}{Claim}

%Mine egne.

\newcommand{\psp}[1]{{{\bf P}^{#1}}}
\newcommand{\psr}[1]{{\bf P}(#1)}
\newcommand{\op}{{\mathcal O}}
\newcommand{\opw}{\op_{\psr{W}}}
\newcommand{\go}{\op}

%Initial ideals
\newcommand{\ini}[1]{\text{in}(#1)}
\newcommand{\gin}[1]{\text{gin}(#1)}
\newcommand{\kr}{{\Bbbk}}
\newcommand{\kk}{{\Bbbk}}
\newcommand{\pd}{\partial}
\newcommand{\vardel}{\partial}
\renewcommand{\tt}{{\bf t}}

%Kategorier

\newcommand{\coh}{{{\text{{\rm coh}}}}}

%Modulkategorier

\newcommand{\modv}[1]{{#1}\text{-{mod}}}
\newcommand{\modstab}[1]{{#1}-\underline{\text{mod}}}

\newcommand{\sut}{{}^{\tau}}
\newcommand{\sumit}{{}^{-\tau}}
\newcommand{\til}{\thicksim}

\newcommand{\totp}{\text{Tot}^{\prod}}
\newcommand{\dsum}{\bigoplus}
\newcommand{\dprod}{\prod}
\newcommand{\lsum}{\oplus}
\newcommand{\lprod}{\Pi}

% Algebraer
\newcommand{\La}{{\Lambda}}
\newcommand{\lam}{{\lambda}}
\newcommand{\GL}{{GL}}

\newcommand{\sirstj}{\circledast}

% Knipper
\newcommand{\she}{\EuScript{S}\text{h}}
\newcommand{\cm}{\EuScript{CM}}
\newcommand{\cmd}{\EuScript{CM}^\dagger}
\newcommand{\cmri}{\EuScript{CM}^\circ}
\newcommand{\cler}{\EuScript{CL}}
\newcommand{\clerd}{\EuScript{CL}^\dagger}
\newcommand{\clerri}{\EuScript{CL}^\circ}
\newcommand{\gor}{\EuScript{G}}
\newcommand{\gF}{\mathcal{F}}
\newcommand{\gG}{\mathcal{G}}
\newcommand{\gM}{\mathcal{M}}
\newcommand{\gE}{\mathcal{E}}
\newcommand{\gD}{\mathcal{D}}
\newcommand{\gI}{\mathcal{I}}
\newcommand{\gP}{\mathcal{P}}
\newcommand{\gK}{\mathcal{K}}
\newcommand{\gL}{\mathcal{L}}
\newcommand{\gS}{\mathcal{S}}
\newcommand{\gC}{\mathcal{C}}
\newcommand{\gO}{\mathcal{O}}
\newcommand{\gJ}{\mathcal{J}}
\newcommand{\gU}{\mathcal{U}}
\newcommand{\mm}{\mathfrak{m}}

\newcommand{\dlim} {\varinjlim}
\newcommand{\ilim} {\varprojlim}

%Kategorier
\newcommand{\CM}{\text{CM}}
\newcommand{\Mon}{\text{Mon}}

%Kategorieer av komplekser

\newcommand{\Kom}{\text{Kom}}

% Begreper homologisk alebra

\newcommand{\EH}{{\mathbf H}}
\newcommand{\res}{\text{res}}
\newcommand{\Hom}{\text{Hom}}
\newcommand{\inhom}{{\underline{\text{Hom}}}}
\newcommand{\Ext}{\text{Ext}}
\newcommand{\Tor}{\text{Tor}}
\newcommand{\ghom}{\mathcal{H}om}
\newcommand{\gext}{\mathcal{E}xt}
\newcommand{\id}{\text{{id}}}
\newcommand{\im}{\text{im}\,}
\newcommand{\codim} {\text{codim}\,}
\newcommand{\resol}{\text{resol}\,}
\newcommand{\rank}{\text{rank}\,}
\newcommand{\lpd}{\text{lpd}\,}
\newcommand{\coker}{\text{coker}\,}
\newcommand{\supp}{\text{supp}\,}
\newcommand{\Ad}{A_\cdot}
\newcommand{\Bd}{B_\cdot}
\newcommand{\Fd}{F_\cdot}
\newcommand{\Gd}{G_\cdot}

%Avbildninger og andre symbolforkortelser

\newcommand{\sus}{\subseteq}
\newcommand{\sups}{\supseteq}
\newcommand{\pil}{\rightarrow}
\newcommand{\vpil}{\leftarrow}
\newcommand{\rpil}{\leftarrow}
\newcommand{\lpil}{\longrightarrow}
\newcommand{\inpil}{\hookrightarrow}
\newcommand{\pils}{\twoheadrightarrow}
\newcommand{\projpil}{\dashrightarrow}
\newcommand{\dotpil}{\dashrightarrow}
\newcommand{\adj}[2]{\overset{#1}{\underset{#2}{\rightleftarrows}}}
\newcommand{\mto}[1]{\stackrel{#1}\longrightarrow}
\newcommand{\vmto}[1]{\overset{\tiny{#1}}{\longleftarrow}}
\newcommand{\mtoelm}[1]{\stackrel{#1}\mapsto}

\newcommand{\eqv}{\Leftrightarrow}
\newcommand{\impl}{\Rightarrow}

\newcommand{\iso}{\cong}
\newcommand{\te}{\otimes}
\newcommand{\into}[1]{\hookrightarrow{#1}}
\newcommand{\ekv}{\Leftrightarrow}
\newcommand{\equi}{\simeq}
\newcommand{\isopil}{\overset{\cong}{\lpil}}
\newcommand{\equipil}{\overset{\equi}{\lpil}}
\newcommand{\ispil}{\isopil}
\newcommand{\vvi}{\langle}
\newcommand{\hvi}{\rangle}
\newcommand{\susneq}{\subsetneq}
\newcommand{\sgn}{\text{sign}}
\newcommand{\prikk}{\bullet}

%Notasjonsforkortelser

\newcommand{\xd}{\check{x}}
\newcommand{\ortog}{\bot}
\newcommand{\tL}{\tilde{L}}
\newcommand{\tM}{\tilde{M}}
\newcommand{\tH}{\tilde{H}}
\newcommand{\tvH}{\widetilde{H}}
\newcommand{\tvh}{\widetilde{h}}
\newcommand{\tV}{\tilde{V}}
\newcommand{\tS}{\tilde{S}}
\newcommand{\tT}{\tilde{T}}
\newcommand{\tR}{\tilde{R}}
\newcommand{\tf}{\tilde{f}}
\newcommand{\ts}{\tilde{s}}
\newcommand{\tp}{\tilde{p}}
\newcommand{\tr}{\tilde{r}}
\newcommand{\tfst}{\tilde{f}_*}
\newcommand{\empt}{\emptyset}
\newcommand{\bfa}{{\bf a}}
\newcommand{\bfb}{{\bf b}}
\newcommand{\bfd}{{\bf d}}
\newcommand{\bfe}{{\bf e}}
\newcommand{\bfp}{{\bf p}}
\newcommand{\bfc}{{\bf c}}
\newcommand{\bfl}{{\bf t}}
\newcommand{\la}{\lambda}
\newcommand{\bfen}{{\mathbf 1}}
\newcommand{\ep}{\epsilon}
\newcommand{\en}{r}
\newcommand{\tu}{s}
\newcommand{\carc}{\mbox{char.}}

\newcommand{\ome}{\omega_E}

\newcommand{\bevis}{{\bf Proof. }}
\newcommand{\demofin}{\qed \vskip 3.5mm}
\newcommand{\nyp}[1]{\noindent {\bf (#1)}}
\newcommand{\demo}{{\it Proof. }}
\newcommand{\demodone}{\demofin}
\newcommand{\parg}{{\vskip 2mm \addtocounter{theorem}{1}  
                   \noindent {\bf \thetheorem .} \hskip 1.5mm }}

\newcommand{\red}{{\text{red}}}
\newcommand{\lcm}{{\text{lcm}}}

% Simplisielle komplekser

\newcommand{\dl}{\Delta}
\newcommand{\cdel}{{C\Delta}}
\newcommand{\cdelp}{{C\Delta^{\prime}}}
\newcommand{\dlst}{\Delta^*}
\newcommand{\Sdl}{{\mathcal S}_{\dl}}
\newcommand{\lk}{\text{lk}}
\newcommand{\lkd}{\lk_\Delta}
\newcommand{\lkp}[2]{\lk_{#1} {#2}}
\newcommand{\del}{\Delta}
\newcommand{\delr}{\Delta_{-R}}
\newcommand{\dd}{{\dim \del}}

%Monomialidealer
\renewcommand{\aa}{{\bf a}}
\newcommand{\bb}{{\bf b}}
\newcommand{\cc}{{\bf c}}
\newcommand{\xx}{{\bf x}}
\newcommand{\yy}{{\bf y}}
\newcommand{\zz}{{\bf z}}
\newcommand{\mv}{{\xx^{\aa_v}}}
\newcommand{\mF}{{\xx^{\aa_F}}}

\newcommand{\pnm}{{\bf P}^{n-1}}
\newcommand{\opnm}{{\go_{\pnm}}}
\newcommand{\ompnm}{\omega_{\pnm}}

\newcommand{\pn}{{\bf P}^n}
\newcommand{\hele}{{\mathbb Z}}
\newcommand{\nat}{{\mathbb N}}
\newcommand{\rasj}{{\mathbb Q}}

\newcommand{\dt}{{\displaystyle \cdot}}
\newcommand{\st}{\hskip 0.5mm {}^{\rule{0.4pt}{1.5mm}}}              
\newcommand{\disk}{\scriptscriptstyle{\bullet}}

\newcommand{\cF}{F_\dt}
\newcommand{\pol}{f}

\newcommand{\disc}{\circle*{5}}

\def\CC{{\mathbb C}}
\def\GG{{\mathbb G}}
\def\ZZ{{\mathbb Z}}
\def\NN{{\mathbb N}}
\def\RR{{\mathbb R}}
\def\OO{{\mathbb O}}
\def\QQ{{\mathbb Q}}
\def\VV{{\mathbb V}}
\def\PP{{\mathbb P}}
\def\EE{{\mathbb E}}
\def\FF{{\mathbb F}}
\def\AA{{\mathbb A}}

\begin{abstract}
We study the linear space generated by the
multigraded Betti diagrams of $\hele^n$-graded artinian
modules of codimension $n$ whose resolutions become pure of a given type
when taking total degrees. We show that 
the multigraded Betti diagram of the equivariant resolution constructed
in \cite{EFW} by D.Eisenbud, J.Weyman, and the author, and all its
twists, form a basis for this linear space. We also show that it is
essentially unique with this property.
\end{abstract}
\maketitle

\section*{Introduction}

Recent years has seen a breakthrough in the studies of syzygies of graded
modules over the polynomial ring $S = \kk[x_1, \ldots, x_n]$. 
In \cite{BS}, M.Boij and J.S\"oderberg formulated conjectures describing the positive
cone of Betti diagrams of artinian modules over the polynomial ring. The conjectures
were subsequently proven by the work of D.Eisenbud, J.Weyman, and the author in 
\cite{EFW}, and by Eisenbud and F.-O. Schreyer in \cite{ES}. Fundamental in showing
the conjectures is to show the existence of pure resolution of artinian modules.
A resolution is pure if it has the form
\[ S(-d_0)^{\beta_0} \vpil S(-d_1)^{\beta_1} 
\vpil \cdots \vpil S(-d_n)^{\beta_n} \]
for a sequence $\bfd : d_0 < d_1 < \cdots < d_n$. In \cite{EFW} the existence 
of such
resolutions of graded artinian modules is shown for every sequence $\bfd$ when
$\mbox{char}\, \kk = 0$.  Moreover the construction there is a quite explicit 
$\GL(n)$-equivariant  resolution. In particular it is equivariant for the diagonal matrices
and hence ${\hele}^n$-graded.

 The beauty and naturality of this resolution is apparent from the construction.
It has recently been generalised by S.Sam and Weyman in \cite{SW} to wider classes
of equivariant resolutions. In this paper we consider the class of resolutions of 
${\hele}^n$-graded artinian modules which become pure when taking total degrees.
We establish, in a precise sense, that the equivariant resolution constructed in \cite{EFW}, or rather
its multigraded Betti diagram, is the fundamental resolution in this class. 
%Thereby we establish the canonical stature of the multigraded Betti diagram of
%the equivariant resolution.

   Before stating the results more precisely, let us consider a simple example
for illustration. If $F_{\prikk}$  is a resolution of a  ${\hele}^n$-graded module,
we have a twisted complex $F_{\prikk}(\bfa)$ for $\bfa \in \hele^n$. If $\beta$
is the Betti diagram of $F_\prikk$, then $F_{\prikk}(\bfa)$ will have a Betti diagram
which we denote by $\beta(\bfa)$. 

\begin{example} The following example was worked out together with J.Weyman.
Let $S =\kk[x_1,x_2]$ and suppose
$d_1 - d_0 = 2$ and $d_2 - d_1 = 3$. 
The equivariant resolution has the following form where we have written
the bidegrees of the generators below the terms.
\begin{equation} \label{IntroLigEkvi}
\underset{\scriptsize{\begin{matrix} (2,0) \\ (1,1) \\ (0,2) \end{matrix}}} {S^3} \vpil 
\underset{\scriptsize{\begin{matrix} (4,0) \\ (3,1) \\ (2,2) \\ (1,3) \\ (0,4) \end{matrix}}}
{S^5} \vpil
\underset{\scriptsize{\begin{matrix} (4,3) \\ (3,4) \end{matrix}}} {S^2}.
\end{equation}
Let $\beta_1$ be its bigraded Betti table.
In \cite[Remark 3.2]{BS} Boij and S\"oderberg also gave 
a construction of pure resolutions in the case of two variables.
These were resolutions of a quotient of a pair of monomial ideals. 
For the type above the resolution had the following
bidegrees.
\begin{equation} \label{IntroLigBS}
\underset{\scriptsize{\begin{matrix} (4,0) \\ (2,2) \\ (0,4) \end{matrix}}} 
{S^3} \vpil 
\underset{\scriptsize{\begin{matrix} (6,0) \\ (4,2) \\ (3,3) \\ (2,4) \\ (0,6) 
\end{matrix}}} {S^5} \vpil
\underset{\scriptsize{\begin{matrix} (6,3) \\ (3,6) \end{matrix}}} {S^2}.
\end{equation}
Let $\beta_2$ be its Betti diagram. Then $\beta_2$ is a linear
combination of twists of $\beta_1$ but not vice versa :
$\beta_2 = \beta_1(2,0) - \beta_1(1,1) + \beta_1(0,2)$.
This indicates that in some way the complex (\ref{IntroLigEkvi}), or
at least its Betti diagram, is more fundamental than that of the complex 
(\ref{IntroLigBS}). 
\end{example}

Now let $e_i = d_i - d_{i-1}$ and let $\bfe$ be the sequence of these differences.
Consider ${\hele}^n$-graded resolutions and their Betti diagrams, 
of artinian ${\hele}^n$-graded modules
over the polynomial ring $S$ of dimension $n$.
We shall be interested in those resolutions which become pure when taking total degrees
and for which the difference sequence of these total degrees is $\bfe$.
Let $L(\bfe)$ be the $\rasj$-vector space generated by such Betti diagrams. Our result is
a complete description of this vector space when $\kr$ has characteristic $0$. 
For the given $\bfe$, consider the equivariant resolution constructed in \cite{EFW},
which has $\bfe$ as difference vector of the total degrees, and let
$\beta$ (which of course depends on $\bfe$) be its ${\hele}$-graded Betti
diagram.
%Let $\beta$ be the ${\hele}^n$-graded Betti 
%diagram of the equivariant resolution constructed in \cite{EFW}, which has $\bfe$ as
%difference vector of total degrees. 
Also let $r$ be the greatest common divisor
of $e_1, \ldots, e_n$. In the case $r=1$ our main result Theorem 
\ref{LinbettiTheMainTo} says the following.
\begin{theoremA}
The set of twisted diagrams
$\beta(\bfa)$ where $\bfa \in \hele^n$, constitute a basis for 
the lattice of integral points in $L(\bfe)$. Moreover, up to a twist $\bfa$, 
$\beta$ is the unique Betti diagram with this property.

In particular the $\beta(\bfa)$ where $\bfa \in \hele^n$ form a basis for the
vector space $L(\bfe)$.
%another multigraded Betti diagram of a resolution such that the $\beta^\prime(\bfa)$
%form a basis,
%then $\beta^\prime$ is an integer multiple of some twist $\beta(\bfa)$. 
\end{theoremA}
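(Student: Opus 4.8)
The plan is to first understand the structure of $L(\bfe)$ by analyzing what constraints a Betti diagram must satisfy to lie in this space. The key observation is that when we take total degrees, the resolution becomes pure of type $\bfd$, so by the Herzog--K\"uhl equations the total-degree Betti numbers $\beta_i$ are determined up to a common scalar; the multigraded refinement is where the freedom lies. So a diagram $\gamma \in L(\bfe)$ is a function assigning to each pair $(i, \bfa)$ with $\bfa \in \hele^n$ and $|\bfa| = d_i$ a rational number, subject to: (i) alternating-sum (Euler characteristic) conditions coming from the fact that $\gamma$ is a $\hele^n$-graded Hilbert-series relation of an artinian module of the right total type, and (ii) whatever positivity or support conditions are automatically encoded. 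I would first pin down precisely which linear conditions cut out $L(\bfe)$ as a subspace of the ambient space of all such formal diagrams, and compute its dimension.

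Next, I would compute the multigraded Betti diagram $\beta$ of the \cite{EFW} equivariant resolution explicitly enough to see its support: in the $i$-th homological degree the generators sit in the degrees dictated by a Schur-like / Pieri combinatorics, occupying a ``staircase'' of lattice points on the hyperplane $|\bfa| = d_i$. The twists $\beta(\bfa)$ for $\bfa \in \hele^n$ then give a large family of diagrams in $L(\bfe)$, and I would show they span by a triangularity argument: order the lattice points in some homological degree (say the first, $i=0$, where the generators of $\beta$ form a contiguous simplex of $\binom{n-1+e_1}{n-1}$-many points, or an appropriate face) lexicographically, and observe that $\beta(\bfa)$ has a well-defined ``leading'' lattice point that moves as $\bfa$ varies. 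This exhibits the change-of-basis matrix between $\{\beta(\bfa)\}$ and a monomial-type basis of $L(\bfe)$ as (unitriangular after suitable normalization), so the twists are a $\ZZ$-basis of the integral lattice, not merely a $\QQ$-basis. The case $r = \gcd(e_1,\dots,e_n) = 1$ presumably enters here: it guarantees that the relevant lattice of ``admissible'' leading terms is all of a sublattice of $\hele^n$ of the expected rank (when $r > 1$ there is a coarser periodicity and one only gets an index-$r$-type sublattice, handled separately in the paper).

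For the uniqueness clause — that $\beta$ is, up to twist, the only Betti diagram whose twists form such a basis — I would argue as follows. Suppose $\gamma \in L(\bfe)$ is an honest (nonnegative, integral) Betti diagram whose translates $\{\gamma(\bfa)\}$ form a $\ZZ$-basis of the integral lattice of $L(\bfe)$. Then in particular $\beta = \sum c_{\bfa}\, \gamma(\bfa)$ with integer coefficients $c_{\bfa}$, and symmetrically $\gamma$ is an integer combination of twists of $\beta$. Looking at the extreme lattice point in homological degree $0$ (the lex-largest, which is forced to be a single point with coefficient $\beta_0$ in $\beta$, by the shape computed above), and using that both $\beta$ and $\gamma$ are supported on the same total-degree hyperplanes with nonnegative entries, one deduces that the ``width'' (convex hull) of the degree-$0$ part of $\gamma$ cannot be smaller than that of $\beta$ without breaking spanning, and cannot be larger without breaking the basis (linear independence / integrality) property — hence they coincide, and then the Euler-characteristic / Herzog--K\"uhl rigidity propagates the equality $\gamma = \beta(\bfa_0)$ through all homological degrees.

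The main obstacle I anticipate is the spanning/triangularity step: it requires knowing the multigraded Betti numbers of the \cite{EFW} resolution precisely enough to identify a monomial basis of $L(\bfe)$ for which the twist matrix is triangular with unit (or at least $\pm 1$) leading entries, and to verify this is a basis \emph{over $\ZZ$} rather than just $\QQ$. Getting the boundary combinatorics of the staircase right — especially matching the number of lattice points in degree $0$ with the dimension of $L(\bfe)$ coming from the Euler-characteristic count, and seeing why $r=1$ makes these agree — is where the real work lies; the uniqueness argument is then a comparatively soft consequence of the rigidity already built into $L(\bfe)$.
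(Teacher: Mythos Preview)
Your proposal takes a quite different route from the paper, and there is a real gap.

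The paper's strategy is algebraic rather than combinatorial. It encodes a multigraded Betti diagram as an $(n+1)$-tuple $B=(B_0,\ldots,B_n)$ of homogeneous Laurent polynomials (the ``Betti polynomials''), so that $L'(\bfe)$ --- the space of diagrams satisfying the multigraded Herzog--K\"uhl equations --- becomes a submodule of ${\QQ}\{t_1,\ldots,t_n\}^{n+1}$. A valuation/well-ordering argument (Proposition~\ref{LinbettiProStruktur} and Proposition~\ref{LinbettiProLauval}) shows this submodule is free of rank one over the Laurent polynomial ring: there is a single tuple $A$, unique up to a unit, with $L'(\bfe)={\QQ}\{t_1,\ldots,t_n\}\cdot A$. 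The Betti polynomials of the equivariant resolution are the Schur polynomials $s_{\alpha(\bfe,i)}$, and the heart of the proof is Theorem~\ref{SchurTheFelles}: when $r=\gcd(e_1,\ldots,e_n)=1$, these $n+1$ Schur polynomials have greatest common divisor $1$. Only then can one conclude that the tuple $s=(s_{\alpha(\bfe,0)},\ldots,s_{\alpha(\bfe,n)})$ \emph{is} the generator $A$, rather than a nontrivial multiple of it. The basis and uniqueness statements follow immediately from this.

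Your triangularity sketch correctly captures that the $\beta(\bfa)$ are linearly independent (each Schur polynomial has a unique highest-weight monomial with coefficient~$1$), and your reduction idea is close in spirit to the paper's valuation. But the spanning step does not follow from triangularity alone: you have not ruled out the existence of an element of $L(\bfe)$ that is ``smaller'' than $\beta$ --- concretely, a tuple $A$ with $\beta = p\cdot A$ for some non-unit Laurent polynomial $p$. If such an $A$ existed, its twists would be a strictly larger lattice and your $\beta(\bfa)$ would fail to span. Excluding this is exactly the statement $\gcd(s_{\alpha(\bfe,0)},\ldots,s_{\alpha(\bfe,n)})=1$, and proving it occupies an entire section of the paper (inductive arguments on Schur polynomials, factoring through $s_{r\rho-\rho}$, Lemmas~\ref{SchurLemFrfaktor}--\ref{SchurLemLaRRelprim}). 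This is also precisely where the hypothesis $r=1$ enters: when $r>1$ the gcd is $s_{r\rho-\rho}\neq 1$, and $\beta$ is \emph{not} the generator. Your remark that ``$r=1$ presumably enters'' in a lattice-index count does not locate this; the obstruction is a common polynomial factor of the Betti polynomials, not a mismatch of lattice ranks.

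In short: your outline would establish linear independence and perhaps the module structure, but without a substitute for the gcd computation of Theorem~\ref{SchurTheFelles} you cannot conclude that $\beta$ generates rather than merely lies in $L(\bfe)$.
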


This theorem shows the canonical stature of the multigraded Betti diagram
of the equivariant resolution.

For arbitrary $r$ the theorem holds true but with $\beta$ being the Betti diagram of 
a somewhat modified resolution.
We consider the equivariant resolution
with difference sequence $\bfe^\prime$ where $\bfe = r \cdot \bfe^\prime$.
By replacing $x_i$ by $x_i^r$ in this resolution corresponding to $\bfe^\prime$, 
we obtain a resolution with
difference sequence $\bfe$, and now let $\beta$ be the Betti diagram of this
complex. With this modification the above theorem holds for any $r$ (when 
$\kr$ has characteristic $0$).
%and is our second main Theorem \ref{LinbettiTheMainTo}.
%Our main theorems \ref{LinbettiTheMainEn} and \ref{LinbettiTheMainEn} 
%are  more comprehensive version
%of this.

%\begin{theoremA}
%The set of twisted diagrams
%$\beta(\bfa)$ where $\bfa \in \hele^n$, constitute a basis for 
%$L(\bfe)$. Moreover if $\beta^\prime$ is 
%another multigraded Betti diagram of a resolution such that the 
%$\beta^\prime(\bfa)$ form a basis, 
%then $\beta^\prime$ is an integer multiple of some twist $\beta(\bfa)$. 
%\end{theoremA}

The Betti diagrams of $\ZZ^n$-graded artinian modules fulfil a multigraded
version of the Herzog-K\"uhl equations. We then introduce the $\rasj$-vector space
$L^\prime(\bfe)$ generated by $\ZZ^n$-graded diagrams (that need not
arise from resolutions) whose total degrees are pure with difference vector $\bfe$,
and which fulfil the multigraded Herzog-K\"uhl equations. Note that the field
$\kr$ is not involved in the definition of this linear space.

There is a natural injection $L(\bfe) \pil L^\prime(\bfe)$. With $L(\bfe)$ replaced
by $L^\prime(\bfe)$, the theorem above also holds and is the essential part of our 
first main Theorem
\ref{LinbettiTheMainEn} (this does not involve $\kr$).
This statement will imply that $L^\prime(\bfe)$ may be identified with the Laurent
polynomial ring ${\QQ}\{t_1, \ldots, t_n\}$, and $L(\bfe)$ may be identified as an
ideal ${\mathscr I}(\kr;\bfe)$ in this ring. When $\kr$ has characteristic $0$, what we say
above shows that this ideal is the whole ring.
But when it has characteristic $p$ it is an intriguing question,
which we do not know much about, to describe this ideal. 

\medskip
In the case where the resolutions are simply $\hele$-graded instead
of $\hele^n$-graded, one may also consider the vector space $L(\bfe)$. This decomposes
as a sum of one-dimensional spaces $L(\bfd)$, one for each $\bfd$ with
difference sequence equal to $\bfe$.
Taking the lattice of integer points in $L(\bfd)$, it is conjectured in \cite{EFW} 
that all sufficiently large diagrams may be realised as Betti diagrams of resolutions.
There are however positive diagrams which are not realised
by any resolution, see also \cite{Er} and \cite{EES} for more in this direction.  
Our result implies, see Corollary \ref{linbettiCorSimgr}, that in 
the $\hele^n$-graded case, if you take an integer lattice point of 
$L(\bfd)$, a $\ZZ^n$-graded diagram, and form the $\ZZ$-graded diagram 
from it by taking total degrees, then such a diagram must be an integer
multiple of the $\ZZ$-graded Betti diagram associated to the equivariant
resolution.

Of course, even more interesting than the linear space $L(\bfe)$ is the positive 
rational cone
$P(\bfe)$ generated by the Betti diagrams of resolutions of ${\hele}^n$-graded 
artinian
modules which are pure with respect to total degrees and with difference sequence
$\bfe$ 
of the total degrees. It is considerably more difficult to describe this cone. In
the paper \cite{BF} by Boij and the author, we describe this cone completely
in the case of two variables, and give some examples in the case of three variables.

\medskip
The organisation of the paper is as follows. In Section 1 we give
basic facts and notations. We note that multigraded Betti diagrams fulfil
strong numerical criteria, the multigraded versions of the Herzog-K\"uhl equations.
We recall the form of the equivariant complex, and state our main results concerning the
basis of the linear space $L^\prime(\bfe)$, and that $L(\bfe)$ identifies with
this space when $\kr$ has characteristic $0$. 
The terms of the equivariant resolution are of the form $S \te_k S_\la$
for a Schur module $S_\la$. In Section 2 we study the associated
Schur polynomials $s_\la$ of the terms in the resolution. In order to 
establish our main result, we find the greatest common divisor of these
polynomials. In Section 3 we describe
the structure of the linear space $L^\prime(\bfe)$. From this description and the results in 
Section 2 concerning Schur polynomials, we give the (immediate) proofs of the main theorems 
describing $L(\bfe)$ and $L^\prime(\bfe)$.

\section{The linear space of multigraded Betti diagrams}

In this section we give the basic facts and notations concerning
multigraded Betti diagrams. 
We describe the multigraded Herzog-K\"uhl equations.
We recall the 
construction of pure resolutions in \cite{EFW}. In the end
we give the statement of our main result.

\subsection{Betti diagrams and the Herzog-K\"uhl equations}
\label{SetSubsHK}

Let $S = \kr[x_1, \ldots, x_n]$ be the polynomial ring over a field $\kr$.
We shall study $\hele^ n$-graded free resolutions of artinian 
$\hele^ n$-graded $S$-modules
\[ F_0 \vpil F_1 \vpil \cdots \vpil F_n.  \]
 For a multidegree $\bfa = (a_1, a_2, \ldots, a_n)$ in $\hele^ n$
let $|\bfa| = \sum a_i$ be its total degree. We shall be interested
in the case that these resolutions become pure if we make
them singly graded by taking total degrees. So there is a
sequence $d_0 < d_1 < \cdots < d_n$ such that 
\[ F_i = \oplus_{|\bfa| = d_i} S(- \bfa)^ {\beta_{i, \bfa}}. \] 
The {\it multigraded Betti diagram} of such a resolution is the element
\[ \{\beta_{i,\bfa}\}_{\scriptsize \underset{}
{\begin{matrix} i=0, \ldots, n \\ \bfa \in \hele^n \end{matrix}  }}
\in \oplus_{\hele^n} \nat^{n+1}. \] 

A way of representing
a multigraded Betti diagram which will turn out very convenient for us, 
is to represent $\beta = \{ \beta_{i, \bfa} \}$
where $i = 0,\ldots,n$ and $\bfa \in \hele^{n}$ by Laurent polynomials
\[ B_i(t) = \sum_{\bfa \in \hele^n} \beta_{i,\bfa} \cdot t^\bfa. \]
We call this the {\it Betti polynomial} of the diagram $\beta$ or the 
resolution $F_\prikk$. 
We thus get an $(n+1)$-tuple of Laurent polynomials
\[ B = (B_0, B_1, \ldots, B_n). \]

\medskip
Given a set of total degrees $\bfd : d_0 < d_1 < \cdots < d_n$.
Let 
$L(\bfd)$ in $\oplus_{\hele^n} \rasj^ {n+1}$ be the linear subspace 
generated by
multigraded Betti diagrams of artinian $\hele^n$-graded modules
whose resolutions become pure of degrees $d_0, \ldots, d_n$ after
taking total degrees.  
%Secondly we want to describe the 
%positive rational cone $P(\bfd)$ generated by such multigraded Betti diagrams.

Furthermore  let $e_i = d_i - d_{i-1}$.
This gives the difference vector $\Delta \bfd = \bfe = (e_1, \ldots, e_n)$.
Most of the time it will be convenient to fix the difference vector
instead of the vector of total degrees. 
We therefore let $L(\bfe) = \oplus_{\Delta \bfd  = \bfe} L(\bfd)$ be the linear
subspace of  $\oplus_{\hele^n} \rasj^ {n+1}$ 
generated by all multigraded Betti diagrams which become pure when
considering total degrees, and where the difference vector of these total
degrees is $\bfe$. 

There are some natural restrictions on $L(\bfe)$ coming from the multigraded
Herzog-K\"uhl equations, drawn to 
my attention by M.Boij. If the resolution resolves the module $M$, 
the multigraded Hilbert series of $M$ is
\[ h_M(t) = \frac{\sum_{i,\bfa} (-1)^i \beta_{i, \bfa} \cdot t^ {\bfa}} 
{\Pi_{k=1}^ n (1-t_i)}. \] 
If $M$ is artinian, $h_M(t)$ is a polynomial and 
\begin{equation} \label{SetLigBeta} 
\sum_{i,\bfa} (-1)^ i \beta_{i, \bfa} t^ {\bfa} = h_M(t) \cdot \Pi_{k=1}^n(1-t_i). 
\end{equation}
For each multigraded $\bfa \in \hele^ {n}$ and integer $k = 1, \ldots, n$, let 
the projection $\pi_k(\bfa)$ be $(a_1,\ldots, \hat{a}_k, \ldots, a_n)$, 
the $(n-1)$-tuple where we omit $a_k$.

We obtain the multigraded analogs of the Herzog-K\"uhl (HK) equations
by setting $t_k = 1$ in (\ref{SetLigBeta}) for each $k$. 
This gives for every $\hat{\bfa}$ in $\hele^{n-1}$ and $k = 1, \ldots, n$ 
an equation
\begin{equation} \label{SetLigHK}
\sum_{i,\pi_k(\bfa)= \hat{\bfa}} (-1)^i \beta_{i, \bfa} = 0.
\end{equation}

Now let $L^\prime(\bfe)$ be the linear space of elements in $\oplus_{\bfa \in \hele^{n}} 
{\mathbb Q}^{n+1}$ which fulfil the multigraded HK-equations above, and
become pure diagrams when taking total degrees, and with the difference sequence of 
these total degrees equal to $\bfe$. There is a natural injection 
$L(\bfe) \pil L^\prime(\bfe)$. Note that $L^\prime(\bfe)$ does not
depend on the field $\kr$, but $L(\bfe)$ does.  
Our second main Theorem 
\ref{LinbettiTheMainTo} states that this map is an isomorphism
in characteristic $0$.

\subsection{The equivariant resolution and Schur polynomials}
In \cite{EFW} the author together with D.Eisenbud and J.Weyman
constructed a $\GL(n)$-equivariant pure resolution of an artinian module, 
whose form we now describe. For a partition $\lambda = (\la_1, \ldots,
\la_n)$ let $S_\lambda$ be the associated Schur module, which is an
irreducible representation of $\GL(n)$ (see for instance \cite{FuH}).
The action of the diagonal matrices in $\GL(n)$ gives a decomposition of 
$S_\lambda$ as a $\hele^n$-graded vector space. The basis elements are
given by semi-standard Young tableau of shape $\la$ with entries from 
$1,2, \ldots, n$. All the nonzero graded pieces in this decomposition
 have total degree $|\la| = \sum_{i=1}^n \la_i$. 
The free module $S \te_k S_\la$ then becomes a free multigraded module
where the generators all have total degree $|\la|$.

Now given the difference vector $\bfe$, let 
\[ \la_i = \sum_{j = i+1}^n (e_j - 1) \] and define a sequence of 
partitions for $i=0, \ldots, n$ by
\begin{equation} \label{SetLigAlfa}
\alpha(\bfe,i) = (\la_1 + e_1, \la_2 + e_2, \ldots, \la_i + e_i, \la_{i+1}, 
\ldots, \la_n).
\end{equation}
The construction in  \cite{EFW} then gives a $\GL(n)$-equivariant resolution
\begin{equation} \label{SetLigEe}
E(\bfe) :  S \te_k S_{\alpha(\bfe,0)} \vpil
S \te_k S_{\alpha(\bfe,1)} \vpil \cdots \vpil S \te_k S_{\alpha(\bfe,n)}
\end{equation}
of an artinian $S$-module. Note that our notation differs somewhat from
\cite{EFW}. There the $\alpha$'s depend on $\bfd$ while we use the
difference vector as argument.

The Betti polynomial of  $S \te_k S_\la$ will be the character 
of $S_\la$ which
is the Schur polynomial $s_\la$. For a matrix $(a_{ij})$ where 
$i,j = 1,\ldots, n$, let $|a_{ij}|$ denote the determinant of the matrix.
The Schur polynomial is then given by the expression 
\[ s_\la = \frac{|t_i^{\la_j + n-j}|}{|t_i^{n-j}|}. \]
Note that the denominator here is $D = \Pi_{i < j} (t_j - t_i)$.

\medskip
It is also interesting to note the following.
\begin{lemma} \label{linbettiLemMat}
For $i = 0, \ldots, n$, the $i$'th Betti polynomial $B_i$ associated to
the equivariant complex $E(\bfe)$, is the maximal minor obtained by
deleting column $n-i$ in the $n \times (n+1)$ matrix 
%The $(n+1)$-tuple of Betti polynomials associated to the equivariant
%complex $E(\bfe)$ are the maximal minors of the $n \times (n+1)$-matrix
\begin{equation*} \label{SetLigMat}
\begin{bmatrix} 1 & t_1^{e_n} & t_1^{e_n + e_{n-1}} & \cdots 
                  & t_1^{e_n + e_{n-1} + \cdots + e_1} \\
                1 & t_2^{e_n} & t_2^{e_n + e_{n-1}} & \cdots 
                  & t_2^{e_n + e_{n-1} + \cdots + e_1} \\

                \vdots & & & &  \\
                1 & t_n^{e_n} & t_1^{e_n + e_{n-1}} & \cdots 
                  & t_n^{e_n + e_{n-1} + \cdots + e_1} 
\end{bmatrix}
\end{equation*}
divided by $D = \Pi_{i < j} (t_j - t_i)$.
\end{lemma}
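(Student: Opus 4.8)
The plan is to compute the Schur polynomial $s_{\alpha(\bfe,i)}$ directly from the Jacobi--Trudi-style bialternant formula $s_\la = |t_i^{\la_j+n-j}| / |t_i^{n-j}|$ and recognize the numerator as a maximal minor of the displayed matrix. First I would work out the exponents appearing in the numerator determinant. From the definition $\la_j = \sum_{k=j+1}^n (e_k-1)$, one computes $\la_j + n - j = \sum_{k=j+1}^n e_k$ for $j=1,\dots,n$ (the $-1$'s cancel the $n-j$), and $\la_n + 0 = 0$. So for the ``base'' partition $\alpha(\bfe,0)=(\la_1,\dots,\la_n)$ the numerator is the determinant of the matrix with $(p,j)$ entry $t_p^{\,e_{j+1}+e_{j+2}+\cdots+e_n}$; reading the columns from $j=1$ to $j=n$ these exponents are $e_n+\cdots+e_2,\ e_n+\cdots+e_3,\ \dots,\ e_n,\ 0$, i.e.\ exactly columns $1,2,\dots,n$ of the displayed $n\times(n+1)$ matrix read in reverse order. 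Hence up to the sign of that column reversal, the numerator for $\alpha(\bfe,0)$ is the minor obtained by deleting the last column (column $n$, which is $n-i$ with $i=0$), and dividing by $D=|t_i^{n-j}|=\Pi_{i<j}(t_j-t_i)$ gives $B_0$ — this is the case $i=0$.

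Next I would handle general $i$ by noting the effect of passing from $\alpha(\bfe,0)$ to $\alpha(\bfe,i)$: one adds $e_j$ to $\la_j$ for $j=1,\dots,i$ and leaves $\la_j$ alone for $j>i$. In the bialternant numerator this replaces the exponent $\la_j+n-j$ by $\la_j+e_j+n-j = e_j + (e_{j+1}+\cdots+e_n) = e_j+e_{j+1}+\cdots+e_n$ for $j\le i$. So the exponents occurring in the $n$ columns of the numerator of $s_{\alpha(\bfe,i)}$ become, for $j=1,\dots,i$: $e_1+\cdots+e_n,\ e_2+\cdots+e_n,\ \dots,\ e_i+\cdots+e_n$, and for $j=i+1,\dots,n$: $e_{i+2}+\cdots+e_n,\ \dots,\ e_n,\ 0$. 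Comparing with the list of all $n+1$ column exponents of the displayed matrix, namely $\{0,\ e_n,\ e_n+e_{n-1},\ \dots,\ e_n+\cdots+e_1\}$, one sees that exactly the value $e_n + e_{n-1} + \cdots + e_{i+1}$ is missing. In the indexing of the lemma that missing column is column $n-i$ (counting the first column, the all-ones column, as column $0$, so column $k$ carries exponent $e_n+\cdots+e_{n-k+1}$, and $e_n+\cdots+e_{i+1}$ is column $n-i$). Thus the numerator of $s_{\alpha(\bfe,i)}$ equals, up to reordering the columns, the maximal minor of the displayed matrix with column $n-i$ deleted; dividing by $D$ yields $B_i$.

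The only genuine bookkeeping obstacle is the sign coming from column permutations: in each case the columns of the bialternant numerator appear in a specific order (roughly decreasing exponent) while the displayed matrix lists them in increasing exponent, so one must check that the permutation sign matches the sign convention implicit in ``the maximal minor obtained by deleting column $n-i$.'' I would dispose of this by observing that both $s_\la$ (a genuine character, hence an honest polynomial with nonnegative coefficients) and the minor-divided-by-$D$ are, after clearing $D$, alternating polynomials that agree up to sign; fixing the sign on a single monomial — e.g.\ the lexicographically leading term $t_1^{\la_1+n-1}t_2^{\la_2+n-2}\cdots$ — pins down the sign uniformly, and one checks it is $+1$ with the stated deletion convention (if a global sign $(-1)^{\binom n2}$ or similar appears it can be absorbed into the convention for $D$, which the lemma already leaves as ``$\Pi_{i<j}(t_j-t_i)$''). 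Everything else is the routine exponent arithmetic carried out above, so once the exponent identity $\la_j+n-j=\sum_{k>j}e_k$ is in hand the lemma follows immediately from the bialternant formula quoted just before the statement.
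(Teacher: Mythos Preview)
Your proof is correct and follows essentially the same route as the paper: you compute $\alpha(\bfe,i)+\rho$ via the identity $\la_j+n-j=\sum_{k>j}e_k$, observe that the resulting exponent sequence is the full list $\{0,e_n,e_n+e_{n-1},\ldots,e_n+\cdots+e_1\}$ with the single value $e_n+\cdots+e_{i+1}$ omitted, and conclude from the bialternant formula. The paper does exactly this in two lines, simply writing $\alpha(\bfe,i)+\rho=(\sum_1^n e_j,\ldots,\sum_i^n e_j,\sum_{i+2}^n e_j,\ldots,0)$ and reading off the minor; your additional discussion of the column-reversal sign is more careful than the paper itself, which silently absorbs that sign into the convention for $D$.
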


\begin{proof}
Let $\rho = (n-1, n-2, \cdots, 1,0)$. 
The partition $\alpha(\bfe,i)$ is then
\[ (\sum_1^n e_j, \sum_2^n e_j, \ldots, \sum_i^n e_j, \sum_{i+2}^n e_j,
\ldots, 0) - \rho\]
and the associated Schur polynomial is then the minor we get in the matrix
above by omitting column $n-i$, and dividing by $D$.
\end{proof}

\subsection{The linear space $L(\bfe)$}
\label{LinbettiSusecLe}
For a multigraded Betti diagram 
$\beta = \{ \beta_{i, \bfa} \}$ and a multidegree $\bfl$ in $\hele^ {n}$, 
we get the twisted Betti diagram $\beta(-\bfl)$ which in homological degree
$i$ and multidegree $\bfa$ is given by $\beta_{i,\bfa-\bfl}$. If $\cF$
is a resolution with Betti diagram $\beta$, then $\cF(-\bfl)$ is
a resolution with Betti diagram $\beta(-\bfl)$. 

Also let $F_r : S \pil S$ be the map sending $x_i \mapsto x_i^r$. 
Denote by $S^{(r)}$ the ring $S$ with the $S$-module structure
given by $F_r$.  Given any complex $\cF$ we may tensor it with 
$- \te_S S^{(r)}$ and get a complex we denote by $\cF^{(r)}$. 
Note that if $\cF$ is pure with degrees $\bfd$, then $\cF^{(r)}$ is
pure with degrees $r \cdot \bfd$. 

\medskip
The following are our main results and shows that the numerical 
part of the equivariant complex, its multigraded Betti diagram, 
plays the fundamental role when considering multigraded Betti diagrams
of resolutions of artinian $\hele^n$-graded modules.

\begin{theorem} \label{LinbettiTheMainEn}
Let $r = \gcd(e_1, \ldots, e_n)$ and let $\bfe = r \cdot \bfe^\prime$. 
The Betti diagrams $\beta_{E(\bfe^\prime)^{(r)}}(\bfa)$ where $\bfa$ varies
over $\hele^n$, form a basis for the lattice of integral points in
$L^\prime(\bfe)$. 
Moreover $\beta_{E(\bfe^\prime)^{(r)}}$ is,
up to sign and twist with $\bfa \in {\hele}^n$, the unique element
in $L^\prime(\bfe)$ with this property.

   In particular, the $\beta_{E(\bfe^\prime)^{(r)}}(\bfa)$ where $\bfa$ varies
over $\hele^n$ form basis for the vector space $L^\prime(\bfe)$.
\end{theorem}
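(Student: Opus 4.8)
The plan is to reduce everything to the case $r = 1$ and then to an understanding of the structure of $L'(\bfe)$ via Betti polynomials. First I would set up the dictionary between multigraded Betti diagrams and $(n+1)$-tuples of Laurent polynomials $B = (B_0, \ldots, B_n)$. Under this identification, the operation $\beta \mapsto \beta(-\bfl)$ becomes multiplication of the tuple by the monomial $t^\bfl$, the operation $\cF \mapsto \cF^{(r)}$ becomes the substitution $t_i \mapsto t_i^r$, and the multigraded Herzog–K\"uhl equations \eqref{SetLigHK} say exactly that each alternating sum $\sum_i (-1)^i B_i(t)$, after setting $t_k = 1$, vanishes — i.e. that $(1-t_k)$ divides $\sum_i (-1)^i B_i$ for each $k$, hence $D' := \prod_{k=1}^n(1-t_k)$ divides $\sum_i (-1)^i B_i$. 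I expect this to be worked out in Section 3 (the excerpt says as much), so I would invoke it: $L'(\bfe)$ is identified with the $\QQ$-space of those polynomial tuples whose total degrees are pure with difference vector $\bfe$ and which satisfy this divisibility, and the integral points are the tuples with entries in $\ZZ[t_1^{\pm 1},\ldots,t_n^{\pm 1}]$.

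Next, the heart of the argument is the $r=1$ case. Here I would use Lemma \ref{linbettiLemMat}: the Betti polynomials $B_i$ of $E(\bfe)$ are, up to the common factor $1/D$, the maximal minors of the $n\times(n+1)$ Vandermonde-type matrix with columns $1, t_j^{e_n}, t_j^{e_n+e_{n-1}}, \ldots$. The key input is the result promised in Section 2: the greatest common divisor of the Schur polynomials $s_{\alpha(\bfe,i)}$ (equivalently of these minors divided by $D$) is computed, and when $r = \gcd(e_1,\ldots,e_n) = 1$ this gcd should be $1$ (a unit). Granting that, the argument is the classical fact that for a matrix of size $n\times(n+1)$ whose maximal minors generate the unit ideal in the Laurent polynomial ring, the syzygies among... — more to the point — any polynomial tuple $(B_0,\ldots,B_n)$ with $D'$ dividing $\sum(-1)^i B_i$ can be written as a $\ZZ[t^{\pm 1}]$-linear combination of monomial twists of the minor-tuple. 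Concretely: given a candidate element of $L'(\bfe)$, comparing total-degree-graded pieces and peeling off monomials one at a time (an induction on the "width" of the support in $\ZZ^n$), one shows it lies in the $\ZZ[t^{\pm}]$-span of the twists $t^\bfa \cdot \beta_{E(\bfe)}$; conversely each such twist lies in $L'(\bfe)$ because $E(\bfe)^{(r)}$'s Betti diagram does and the HK-equations and purity are twist-invariant. Linear independence of the twists over $\QQ$ follows since the entries are nonzero Laurent polynomials and distinct twists shift supports. This gives both the basis statement and the "integral points" refinement.

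For general $r$, I would reduce to $r=1$ via the substitution $t_i \mapsto t_i^r$. Writing $\bfe = r\bfe'$, the map $B(t)\mapsto B(t_1^r,\ldots,t_n^r)$ sends $L'(\bfe')$ into $L'(\bfe)$ and sends $\beta_{E(\bfe')}$ to $\beta_{E(\bfe')^{(r)}}$; the claim is that together with monomial twists $t^\bfa$ ($\bfa\in\ZZ^n$, not just $\bfa\in r\ZZ^n$) the image generates all of $L'(\bfe)$. This is where the gcd computation from Section 2 is used in full force: the gcd of the minors of the $\bfe$-matrix is exactly $D$ times a monomial times the gcd of the minors of the $\bfe'$-matrix evaluated at $t_i^r$, so the "extra" common factor is accounted for precisely by allowing all $\ZZ^n$-twists rather than $r\ZZ^n$-twists. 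The uniqueness-up-to-sign-and-twist statement then comes from the following rigidity: if $\gamma\in L'(\bfe)$ is such that its $\ZZ^n$-twists are a lattice basis, then $\gamma$ and $\beta_{E(\bfe')^{(r)}}$ are related by a change of basis of the twist-lattice that commutes with the $\ZZ^n$-action, and any such — an automorphism of the free rank-one module over the group ring $\ZZ[\ZZ^n]=\ZZ[t^{\pm 1}]$ — is $\pm t^\bfa$.

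The main obstacle I anticipate is the gcd computation for the Schur polynomials $s_{\alpha(\bfe,i)}$ — showing the common factor of these $n+1$ minors is exactly $D$ (times a monomial) in the $r=1$ case, with the analogous statement for general $r$. Everything else is then bookkeeping with Laurent polynomials. That computation is exactly what Section 2 is devoted to, so in this proof I would simply cite it; the remaining work here is to translate it, via Lemma \ref{linbettiLemMat} and the structure of $L'(\bfe)$ from Section 3, into the basis and uniqueness statements above.
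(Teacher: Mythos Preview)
Your proposal correctly identifies the two inputs the paper uses --- the gcd computation for the Schur polynomials (Section~2, Theorem~\ref{SchurTheFelles}) and the structural description of $L'(\bfe)$ (Section~3, Theorem~\ref{LinHKTheLprim}) --- and combines them just as the paper does: once one knows $L'(\bfe)$ is a free rank-one module over the Laurent ring generated by a tuple with coprime entries, and that the tuple $s = (s_{\alpha(\bfe',i)}^{(r)})_i$ has coprime entries, the conclusion is immediate. Your uniqueness argument via units of $\ZZ[t^{\pm 1}]$ is also essentially the paper's.

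Two points, however. First, the step where you claim the rank-one structure follows from ``the classical fact that for a matrix of size $n\times(n+1)$ whose maximal minors generate the unit ideal\ldots'' is a gap: the HK conditions are \emph{not} the kernel of that Vandermonde-type matrix over the Laurent ring (setting $t_k=1$ lands in a different ring), so no such classical statement applies. Your fallback ``peeling off monomials'' sketch is in the right spirit but underspecified; the paper's actual argument (Propositions in Section~3) introduces a specific lexicographic valuation on $B_0$, proves by induction on $n$ that any two elements of $L'(\bfe)$ can be reduced against each other (Proposition~\ref{LinbettiProLauval}), and deduces rank one from well-orderedness. This is genuinely the content of Section~3, not bookkeeping, and you should invoke it as a black box rather than try to re-derive it.

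Second, your reduction of general $r$ to $r=1$ via the substitution $t_i\mapsto t_i^r$ is more circuitous than necessary. The paper does not reduce: Theorem~\ref{SchurTheFelles} already gives $s_{\alpha(\bfe,i)} = s_{r\rho-\rho}\cdot s_{\alpha(\bfe',i)}^{(r)}$ with the latter factors coprime (via Lemma~\ref{SchurLemFrFelles}), so one applies Theorem~\ref{LinHKTheLprim} directly to $\bfe$ and recognises $s_{\alpha(\bfe',i)}^{(r)}$ as the generator. Your route would still need to show that every element of $L'(\bfe)$ is a Laurent multiple of something in the image of the substitution, which is again the rank-one structure theorem in disguise.
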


The proof will be given in Section \ref{LinHKSec}. 
The first part of Theorem \ref{LinbettiTheMainEn}
may also be formulated in an equivalent way in terms of the associated
$(n+1)$-tuple of Betti polynomials introduced at the end of 
Subsection \ref{SetSubsHK}.

\medskip
\noindent {\bf Theorem \ref{LinbettiTheMainEn}${}^\prime$} {\em
Let $s = (s_0, \ldots, s_n)$ be the $(n+1)$-tuple of Betti
polynomials of $E(\bfe^\prime)^{(r)}$. If $B = (B_0, \ldots, B_n)$ is any
$(n+1)$-tuple of homogeneous Laurent polynomials 
fulfilling the HK-equations (\ref{SetLigHK}), 
and where the difference vector of the total
degrees is $\bfe$, then $B = p \cdot s$ for some homogeneous
Laurent polynomial $p$.}

\medskip
\begin{example}
Letting $B_{(1)}$ and $B_{(2)}$ be the triples of Betti polynomials of the 
resolutions (\ref{IntroLigEkvi}) and (\ref{IntroLigBS}) of the example in 
the introduction, we have
\[ B_{(2)} = (t_1^2 - t_1t_2 + t_2^2) B_{(1)}. \]
\end{example}

Letting ${\QQ}\{t_1, \ldots, t_n\}$ be the Laurent polynomial ring,
we see that $L^\prime(\bfe)$ is the free module 
${\QQ}\{t_1, \ldots, t_n\}\cdot s$. Identifying  $L^\prime(\bfe)$ 
with this Laurent polynomial ring we have:

\begin{theorem} \label{LinbettiTheMainTo}
The image of the map $L(\bfe) \pil L^\prime(\bfe)$ is an
ideal in the Laurent polynomial ring. When  $\kr$ has characteristic $0$ or
$n=2$, this map is an isomorphism.
\end{theorem}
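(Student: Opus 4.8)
The plan is to prove Theorem \ref{LinbettiTheMainTo} in two stages: first the formal statement that the image of $L(\bfe) \pil L^\prime(\bfe)$ is an ideal, and then the surjectivity when $\carc \kr = 0$ or $n = 2$. For the ideal claim, I would identify $L^\prime(\bfe)$ with the Laurent polynomial ring ${\QQ}\{t_1, \ldots, t_n\}$ via Theorem \ref{LinbettiTheMainEn}${}^\prime$, so that an element of $L^\prime(\bfe)$ corresponds to a Laurent polynomial $p$ with $B = p \cdot s$. The key point is that the image is closed under multiplication by each monomial $t^{\bfa}$ and under taking the product with a suitable ``building block'' polynomial. Multiplication by $t^{\bfa}$ corresponds exactly to the twist operation $\beta \mapsto \beta(-\bfa)$ on Betti diagrams, and since $\cF(-\bfa)$ is again a resolution of an artinian $\hele^n$-graded module with the same total degrees, the image is stable under multiplication by all monomials, hence by all Laurent polynomials with a single nonzero coefficient; being also closed under addition (direct sums of resolutions realise sums of Betti diagrams), it is closed under multiplication by \emph{all} Laurent polynomials, i.e. it is an ideal. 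I should be careful here that ``image'' means the $\QQ$-span, not just the set of Betti diagrams of actual resolutions — the span of Betti diagrams is a $\QQ$-subspace stable under monomial multiplication, and that is precisely an ideal.

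For the surjectivity in characteristic $0$: here Theorem \ref{LinbettiTheMainEn} already gives that the $\beta_{E(\bfe^\prime)^{(r)}}(\bfa)$ form a $\QQ$-basis of $L^\prime(\bfe)$. But each such diagram lies in the image of $L(\bfe)$, because $E(\bfe^\prime)$ is an honest $\GL(n)$-equivariant (hence $\hele^n$-graded) resolution of an artinian module constructed in \cite{EFW}, valid in characteristic $0$; applying the base-change $F_r$ gives $E(\bfe^\prime)^{(r)}$, still a resolution of an artinian $S$-module, now pure with total degrees having difference vector $r \cdot \bfe^\prime = \bfe$; and twisting by $\bfa$ keeps it in this class. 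So the whole basis of $L^\prime(\bfe)$ lies in the image, forcing the image to be all of $L^\prime(\bfe)$, i.e. the ideal is the unit ideal. This is the ``immediate'' part the introduction promises.

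For $n = 2$ the argument is different: here $r$ may be anything, and I do not get the equivariant resolution for free in arbitrary characteristic, so I would invoke the explicit construction of pure resolutions in two variables. By \cite[Remark 3.2]{BS}, in $S = \kr[x_1,x_2]$ there is a pure resolution (of a quotient of two monomial ideals) for \emph{every} total-degree sequence $\bfd$, valid over any field $\kr$; this gives, for each $\bfd$ with $\Delta \bfd = \bfe$, an $\hele^2$-graded Betti diagram in $L(\bfe)$. I then need to show these Boij--S\"oderberg diagrams, together with their twists, already span $L^\prime(\bfe)$. Since $L^\prime(\bfe) \iso {\QQ}\{t_1,t_2\}\cdot s$ is a rank-one free module and the image is an ideal, it suffices to exhibit one element of the image that, as a Laurent polynomial multiple of $s$, is a \emph{unit} — i.e. to show the Betti polynomial triple $B$ of some such resolution equals $u \cdot s$ with $u$ a monomial. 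Concretely, for $n=2$ the monomial-ideal resolution with $\bfd = (0, e_1, e_1 + e_2)$ has its three Betti polynomials being, up to the common denominator $t_2 - t_1$, explicit $2\times 2$ minors of a $2 \times 3$ matrix, and one checks directly that the resulting triple differs from $s$ (the triple for the ``equivariant-type'' pattern) by multiplication by a monomial; the computation in the worked Example (where $B_{(2)} = (t_1^2 - t_1 t_2 + t_2^2)B_{(1)}$, and note $t_1^2 - t_1 t_2 + t_2^2$ becomes a unit precisely when $r$ absorbs the factor of $2$) is the prototype. The main obstacle is this last step: verifying that in two variables the Boij--S\"oderberg diagrams, allowing all twists and all choices of $\bfd$, generate the \emph{unit} ideal rather than a proper ideal — equivalently, that the gcd of their Betti polynomials (divided by $s$) is $1$. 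I expect this reduces, after stripping monomial factors, to a gcd computation of the type carried out in Section 2 of the paper for Schur polynomials, now applied to the explicitly given minors of the $2\times 3$ matrix above, and I would organise the proof so that the $n=2$ case is deduced from a small self-contained lemma of that flavour.
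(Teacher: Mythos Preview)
Your argument for the ideal claim and for the characteristic-zero case is correct and is exactly what the paper does: twisting a resolution by $\bfa$ multiplies its Betti polynomial tuple by $t^{\bfa}$, so the image is a $\QQ$-subspace stable under monomial multiplication, hence an ideal; and in characteristic $0$ the equivariant resolution $E(\bfe^\prime)^{(r)}$ realises the generator $s$, so the ideal is the whole ring.

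The $n=2$ case has a genuine gap. Your plan is to use the Boij--S\"oderberg monomial-ideal resolution from \cite[Remark 3.2]{BS} and then argue that its twists generate the unit ideal. But the paper's own introductory example already refutes this: for $\bfe = (2,3)$ (so $r=1$ and $s$ is the equivariant triple itself), the Boij--S\"oderberg diagram $\beta_2$ satisfies $\beta_2 = (t_1^2 - t_1 t_2 + t_2^2)\,\beta_1$, and the paper explicitly notes that $\beta_1$ is \emph{not} a linear combination of twists of $\beta_2$. In the Laurent-ring picture this says the ideal generated by $\beta_2$ is the proper ideal $(t_1^2 - t_1 t_2 + t_2^2)$. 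Allowing ``all choices of $\bfd$'' does not help: for fixed $\bfe$ the Boij--S\"oderberg construction is essentially unique up to twist, so your proposed gcd computation has only one polynomial in it (up to units) and cannot yield $1$.

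What the paper actually does for $n=2$ is cite an external result, \cite[Proposition~3.1]{BF}, which constructs, over any field $\kr$, a $\hele^2$-graded resolution whose Betti polynomial tuple is exactly $s$. That single fact immediately gives surjectivity. So the missing ingredient is not a gcd lemma but a different, field-independent construction realising $s$ on the nose; the Boij--S\"oderberg resolution does not serve this purpose.
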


The ideal, which is the image of the above map, depends on $\bfe$ and 
may depend on the field $\kr$; denote it ${\mathscr I}(\kr;\bfe)$.
In the case when $\kr$ has characteristic $p$ and $n \geq 3$, it is an 
interesting question to determine this ideal.

\begin{question} Is the ideal ${\mathscr I}(\kr;\bfe)$ in 
the Laurent polynomial ring ${\QQ}\{t_1, \ldots, t_n\}$ always
nonzero? Is it always equal to the whole ring? 
\end{question}

\subsection{The associated diagrams when taking total degrees}

On the rational rays of pure $\hele$-graded Betti diagrams, it is an
open question what integral points come from actual pure resolutions.
The following says that in the case of diagrams arising from 
$\hele^n$-graded resolutions of {\it artinian} modules over 
$\kr[x_1, \ldots, x_n]$, we will not get more than what we get from 
the equivariant resolution. See however the following remark.

\begin{corollary} \label{linbettiCorSimgr}
 (Char.$\,\,\kr = 0$.) Let $\pi$ be the $\hele^n$-graded Betti diagram of a
$\hele^n$-graded artinian module over $\kk[x_1, \ldots, x_n]$, 
whose resolution becomes pure when taking
total degrees. Then the associated $\hele$-graded Betti diagram
$\overline{\pi}$ is an integer multiple of the associated
$\hele$-graded Betti diagram of the equivariant resolution, suitably twisted.
\end{corollary}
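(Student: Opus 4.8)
The plan is to deduce Corollary~\ref{linbettiCorSimgr} directly from Theorem~\ref{LinbettiTheMainTo} (equivalently from the characteristic-$0$ case of Theorem~\ref{LinbettiTheMainEn}${}^\prime$) by passing from the multigraded data to the singly graded data via the substitution $t_i \mapsto t$. First I would fix the difference vector $\bfe$ coming from the total degrees of the resolution of the given module, set $r = \gcd(e_1,\ldots,e_n)$ and $\bfe = r\cdot\bfe^\prime$, and let $s = (s_0,\ldots,s_n)$ be the $(n+1)$-tuple of Betti polynomials of $E(\bfe^\prime)^{(r)}$. By Theorem~\ref{LinbettiTheMainEn}${}^\prime$, since $\pi$ has multigraded Betti polynomials $B = (B_0,\ldots,B_n)$ satisfying the HK-equations with total-degree difference vector $\bfe$, we get $B = p\cdot s$ for a homogeneous Laurent polynomial $p \in \QQ\{t_1,\ldots,t_n\}$.

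The next step is to observe that the passage to the associated $\hele$-graded Betti diagram $\overline{\pi}$ is exactly the ring homomorphism $\phi\colon \QQ\{t_1,\ldots,t_n\} \to \QQ\{t\}$ sending every $t_i$ to $t$: if $B_i(t_1,\ldots,t_n) = \sum_{\bfa} \beta_{i,\bfa} t^\bfa$, then $\phi(B_i) = \sum_{\bfa}\beta_{i,\bfa} t^{|\bfa|} = \sum_{d} \overline{\beta}_{i,d} t^d$, which is the $\hele$-graded Betti polynomial of $\overline{\pi}$ in homological degree $i$. Applying $\phi$ to $B = p\cdot s$ gives $\overline{B} = \phi(p)\cdot \phi(s)$, where $\phi(s)$ is precisely the $(n+1)$-tuple of $\hele$-graded Betti polynomials of the equivariant resolution $E(\bfe^\prime)^{(r)}$ — i.e. of the equivariant resolution with the appropriate total degrees, which in the $\hele$-graded world is a pure resolution with difference vector $\bfe$. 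So $\overline{\pi}$ is a Laurent-polynomial multiple of the $\hele$-graded Betti diagram of (a suitable substitution of) the equivariant resolution.

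It then remains to see that the multiplier $\phi(p)$ is forced to be a single monomial $t^m$, i.e.\ that $\overline{\pi}$ is literally a \emph{twist} of the equivariant diagram (times an integer). Here I would argue using the shape of a pure $\hele$-graded diagram: the $\hele$-graded Betti polynomial tuple of a pure resolution with fixed total degrees $d_0 < \cdots < d_n$ is supported, in homological degree $i$, on the single degree $d_i$, so it has the form $(\gamma_0 t^{d_0},\ldots,\gamma_n t^{d_n})$ with all $\gamma_i > 0$. Both $\overline{B}$ and $\phi(s)$ have this form (for possibly different choices of total degrees with the same difference vector $\bfe$); hence in homological degree $0$ the relation $\overline{B}_0 = \phi(p)\cdot \phi(s)_0$ reads $\gamma_0 t^{d_0} = \phi(p)\cdot\gamma_0^\prime t^{d_0^\prime}$, which forces $\phi(p) = c\, t^{d_0 - d_0^\prime}$ for a constant $c$. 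Comparing in each homological degree then shows $d_i - d_i^\prime = d_0 - d_0^\prime$ for all $i$ (consistent with both having difference vector $\bfe$) and $\gamma_i = c\,\gamma_i^\prime$, so $\overline{\pi} = c\cdot(\text{equivariant diagram, twisted by } d_0 - d_0^\prime)$. Finally, since $\overline{\pi}$ has nonnegative integer entries and the twisted equivariant diagram has positive integer entries with no common factor other than $1$ among the relevant coordinates (this is where one invokes the ``basis for the lattice of integral points'' half of Theorem~\ref{LinbettiTheMainEn}, or the primitivity of $s$), the constant $c$ must be a nonnegative integer, giving the claim.

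The main obstacle I anticipate is the last point: pinning down that the scalar is an \emph{integer} rather than just a positive rational. This should follow because Theorem~\ref{LinbettiTheMainEn} asserts that the twists of $\beta_{E(\bfe^\prime)^{(r)}}$ form a $\ZZ$-basis of the lattice of integral points of $L^\prime(\bfe)$, so an integral point $\pi$ of $L(\bfe) \subseteq L^\prime(\bfe)$ is an integral combination of these twists; applying $\phi$ and using that all these twisted diagrams have the same associated $\hele$-graded diagram up to a $\hele$-twist collapses the integral combination to a single integer multiple. Making this collapsing step precise — keeping track of which twists $\bfa \in \hele^n$ map to the same total-degree twist — is the one place where a little genuine bookkeeping is needed; everything else is formal.
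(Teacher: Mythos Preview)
Your approach is correct and is essentially the paper's own argument, phrased through the substitution homomorphism $\phi$. The paper writes $\pi = \sum_i k_i \beta_E(\bfa_i)$ with all $|\bfa_i|$ equal, shows each $k_i\in\ZZ$ by a leading--term argument (the highest weight of a Schur module has multiplicity one), and then passes to total degrees; your final paragraph reproduces exactly this.

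Two small points. First, your third paragraph works harder than necessary to see that $\phi(p)$ is a single monomial: since $p$ is \emph{homogeneous} of some degree $m$, one has $\phi(p)=p(1,\ldots,1)\,t^{m}$ immediately, with no appeal to the shape of pure diagrams. Second, the parenthetical justification in that paragraph is wrong: the $\hele$-graded Betti numbers of the equivariant resolution need \emph{not} have greatest common divisor $1$ --- the Remark immediately following the corollary gives $(3,6,6,3)$ for $\bfe=(1,2,1)$ --- and neither the lattice-basis statement of Theorem~\ref{LinbettiTheMainEn} nor the coprimality of the multigraded polynomials $s_i$ implies such a gcd condition after specialising $t_i\mapsto t$. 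So the argument ``$c$ is an integer because the target diagram is primitive'' fails. Fortunately your fourth paragraph gives the correct route (and the one the paper uses): $\pi$ integral $\Rightarrow$ the coefficients of $p$ are integers by the lattice-basis half of Theorem~\ref{LinbettiTheMainEn} $\Rightarrow$ $c=\sum_{\bfa} c_{\bfa}\in\ZZ$. Drop the gcd remark and keep that argument.
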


\begin{proof} By Theorems \ref{LinbettiTheMainEn} and \ref{LinbettiTheMainTo},
$\pi$ is a linear combination 
$\sum_i k_i \beta_E(\bfa_i)$ where all the $\bfa_i$ have the 
same total degree, say $a$. Let $P$ be the $(n+1)$-tuple of Laurent
polynomials associated to $\pi$, and $s$ the associated $(n+1)$-tuple
of Laurent polynomials associated to the equivariant resolution.
Then 
\[ P = \sum_i k_i t^{{\bfa}_i}s, \]
and we will show that all coefficients here are integers. 
Considering  the first polynomial in the tuple we have
$P_0 = \sum_i k_i t^{{\bfa}_i}s_0$. 
Since the highest weight vectors of Schur modules have multiplicity one,
the lexicographically largest term of $s_0$ has 
coefficient $1$. 
The coefficient of the highest lexicographic term
of $P_0$ must then equal $k_i$ for some $i$, and so $k_i$ is an integer.
Then $P_0 - k_i t^{{\bfa}_i}s_0$ has integer coefficients. In this way
we may continue and get that  all $k_j$ are integers.
Taking total degress we get
\[ \overline{\pi} = (\sum_i  k_i)\, \,  \overline{\beta_E}(a) . \]
\end{proof}

\begin{remark} On the ray generated by the diagram
\[ \tau =  \begin{pmatrix} 1 & 2 & - & - \\ - & - & 2 & 1 
   \end{pmatrix} , \]
 the equivariant diagram is $3 \tau$. The above says that it is not
possible to realize $2 \tau$ (or $5\tau$ or $7 \tau$) 
as coming from a $\hele^3$-graded diagram over 
the polynomial ring in three variables. It is however possible to realize
$2 \tau$ as coming from a $\hele^4$-graded diagram over the polynomial ring 
$S$ in four variables. Just take a general $\hele^4$-graded map  :
\[ S^2_{(0,0,0,0)} \leftarrow S_{(1,0,0,0)} \oplus S_{(0,1,0,0)} \oplus
S_{(0,0,1,0)} \oplus S_{(0,0,0,1)}. \] Then $2 \tau$ will be
the $\hele$-graded diagram of the resolution  of the cokernel. Note
however that the cokernel is not artinian.
\end{remark}

\section{Schur polynomials}

We describe the greatest common divisor of 
the Betti polynomials occurring in the equivariant pure resolutions.
We do this in Theorem \ref{SchurTheFelles} and this is the only
result of this section that we use later on.

\subsection{Common divisors and group actions}
Suppose a group $G$ acts on the polynomial ring $\kk[t_1, \ldots, t_n]$.
A polynomial $p$ is {\it semi-invariant} if the groups acts
as $g.p = \mu(g)p$ for some character $\mu : G \pil \kk$. 

\begin{lemma}
Let $p$ and $q$ be semi-invariant polynomials in $\kk[t_1, \ldots, t_n]$.
Then their greatest common divisor is also a semi-invariant.
\end{lemma}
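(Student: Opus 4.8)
The plan is to exploit uniqueness of factorization in the polynomial ring together with the fact that $G$ permutes the irreducible factors of any semi-invariant in a controlled way. First I would write $p = c \prod_k f_k^{a_k}$ as a product of pairwise non-associate irreducible polynomials $f_k$, and similarly for $q$; the greatest common divisor is then $d = \prod_k f_k^{\min(a_k,b_k)}$, up to a scalar, where $b_k$ is the exponent of $f_k$ in $q$ (with the convention that an irreducible not appearing has exponent $0$). Next, for $g \in G$, since $g.p = \mu(g)p$, applying $g$ to the factorization of $p$ gives another factorization into irreducibles, so $g$ permutes the associate-classes of the $f_k$ and preserves their multiplicities $a_k$: that is, there is a permutation $\sigma_g$ of the index set with $g.f_k$ associate to $f_{\sigma_g(k)}$ and $a_{\sigma_g(k)} = a_k$. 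The same $g$ acting on $q$ forces $b_{\sigma_g(k)} = b_k$ as well, because $g.q = \mu'(g) q$ has the same factor-classes permuted by the \emph{same} $\sigma_g$ (an irreducible dividing $p$ and $q$ simultaneously is sent to an irreducible dividing both $g.p$ and $g.q$). Hence $\min(a_{\sigma_g(k)}, b_{\sigma_g(k)}) = \min(a_k, b_k)$, so $g.d$ has exactly the same irreducible factors with the same multiplicities as $d$, i.e.\ $g.d = \nu(g) d$ for some scalar $\nu(g) \in \kk$.

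It remains to check that $\nu : G \to \kk$ is a character, i.e.\ multiplicative. This is immediate once $g.d = \nu(g)d$ is established for all $g$: from $(gh).d = g.(h.d) = g.(\nu(h)d) = \nu(h)(g.d) = \nu(h)\nu(g)d$ and $(gh).d = \nu(gh)d$ we get $\nu(gh) = \nu(g)\nu(h)$, using that $d \neq 0$. One should also note $\nu(g) \neq 0$ since $g$ acts invertibly on the polynomial ring (it sends the nonzero $d$ to the nonzero $g.d$), so $\nu$ indeed lands in $\kk^\times$ and is a genuine character. If one wants to avoid even the mild bookkeeping of associate-classes, one can instead argue: $d \mid p$ implies $g.d \mid g.p = \mu(g)p$, so $g.d \mid p$, and similarly $g.d \mid q$, hence $g.d \mid d$; applying the same to $g^{-1}$ gives $d \mid g.d$, so $g.d$ and $d$ are associates and $g.d = \nu(g) d$. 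This is cleaner and I would present it this way.

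\textbf{Main obstacle.} There is essentially no deep obstacle here; the only subtlety is making sure the argument does not secretly assume $p$ and $q$ are nonzero or that $\kk$ is a field where one can divide. If $p$ or $q$ is zero the gcd statement is trivial (the gcd is the other one, already a semi-invariant), so assume both nonzero; then $d \neq 0$ and division by the scalar $\nu(g)$ is legitimate. A second point worth a sentence is that ``greatest common divisor'' is only well-defined up to a unit (a nonzero scalar), and the claim should be read accordingly — the conclusion is that \emph{some}, equivalently every, choice of gcd is semi-invariant. With the divisibility version of the argument above, both points are handled in a line each, so the whole proof is short.
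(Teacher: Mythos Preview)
Your proposal is correct, and the clean divisibility argument you single out at the end --- $g.d \mid g.p = \mu(g)p$ and $g.d \mid g.q = \mu'(g)q$, hence $g.d \mid d$, and symmetrically $d \mid g.d$ --- is exactly the paper's approach, stated there in two sentences without spelling out the $g^{-1}$ step or the verification that $\nu$ is a character. Your factorization-bookkeeping version in the first paragraph is also fine but, as you yourself note, unnecessarily elaborate; the paper agrees and goes straight to the divisibility argument.
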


\begin{proof}
If $b$ is the greatest common divisor, then $g.b$ is also a common divisor.
Hence $g.b = \mu(g)b$ for some character $\mu$.
\end{proof}

We now give two cases where one may actually conclude that if $p$ and $q$ are
invariants, their greatest common divisor is also an invariant.
Recall the algebra morphism $F_r : S \pil S$ of Subsection 
\ref{LinbettiSusecLe}.

\begin{lemma} \label{SchurLemFrFelles}
Let $p$ and $q$ be polynomials in $\kk[t_1, \ldots, t_n]$ and 
$b$ their greatest common divisor. For a natural number $r$, 
the greatest common divisor
of $p^{(r)}$ and $q^{(r)}$ is $b^{(r)}$.
\end{lemma}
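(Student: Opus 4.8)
The plan is to reduce to the coprime case and then use the norm of the finite free ring extension $R \subseteq S$, where $R = \kk[t_1^r,\ldots,t_n^r]$, to push a divisibility down from $S$ into $R$ — the subring on which $F_r$ restricts to an isomorphism.

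\emph{Reduction.} Since $F_r$ is a ring homomorphism, $b^{(r)}$ divides both $p^{(r)}$ and $q^{(r)}$ as soon as $b$ divides $p$ and $q$; so $b^{(r)}\mid\gcd(p^{(r)},q^{(r)})$. Writing $p = b p'$, $q = b q'$ with $\gcd(p',q') = 1$, the identity $\gcd(dx,dy) = d\gcd(x,y)$ in the UFD $S$ gives $\gcd(p^{(r)},q^{(r)}) = b^{(r)}\cdot\gcd\bigl((p')^{(r)},(q')^{(r)}\bigr)$. Thus it suffices to show: \emph{if $p',q'\in S$ are coprime, then $(p')^{(r)}$ and $(q')^{(r)}$ are coprime.}

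\emph{The norm argument.} Note $S$ is a free $R$-module of rank $r^n$, with basis the monomials $t^{\bfa}$, $0\le a_i<r$, and that $F_r\colon S\to R$ (sending $t_i\mapsto t_i^r$) is a ring isomorphism onto $R$, with $h^{(r)} = F_r(h)$. Let $N\colon S\to R$ be the norm, $N(s) = \det(m_s)$ where $m_s$ is multiplication by $s$ as an $R$-linear endomorphism of $S$; then $N$ is multiplicative and $N(a) = a^{r^n}$ for $a\in R$. Suppose, for a contradiction, that an irreducible $\pi\in S$ divides both $(p')^{(r)}$ and $(q')^{(r)}$. Applying Cayley--Hamilton to $m_\pi$ and evaluating the resulting endomorphism identity at $1\in S$ shows that $\pi$ satisfies its characteristic polynomial over $R$, whose constant term is $\pm N(\pi)$; hence $\pi\mid N(\pi)$ in $S$. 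On the other hand, writing $(p')^{(r)} = \pi\cdot h$ with $h\in S$ and applying $N$, and using that $(p')^{(r)}\in R$, we get $N(\pi)\mid N\bigl((p')^{(r)}\bigr) = \bigl((p')^{(r)}\bigr)^{r^n}$ \emph{in $R$}, and likewise $N(\pi)\mid\bigl((q')^{(r)}\bigr)^{r^n}$ in $R$.

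\emph{Descent and conclusion.} Write $N(\pi) = F_r(c)$ with $c\in S$. Since $\bigl((p')^{(r)}\bigr)^{r^n} = F_r\bigl((p')^{r^n}\bigr)$ and $F_r\colon S\to R$ is a ring isomorphism, the two divisibilities above pull back to $c\mid(p')^{r^n}$ and $c\mid(q')^{r^n}$ in $S$. But $\gcd(p',q') = 1$ in the UFD $S$ forces $\gcd\bigl((p')^{r^n},(q')^{r^n}\bigr) = 1$, so $c$ is a unit, hence $N(\pi) = F_r(c)$ is a unit of $R$, i.e.\ a nonzero constant; combined with $\pi\mid N(\pi)$ this makes $\pi$ a unit, contradicting irreducibility. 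The delicate point is the end of the norm argument: one must obtain a divisibility that genuinely holds \emph{in the subring $R$} rather than merely in $S$, so that it can be transported back through the isomorphism $F_r$; the norm map is precisely what effects this descent of divisibility from $S$ to $R$. Note that the whole argument is valid over an arbitrary field $\kk$, with no hypothesis relating $\operatorname{char}\kk$ and $r$, in agreement with the statement.
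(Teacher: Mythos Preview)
Your proof is correct and complete. The reduction to the coprime case is clean, and the norm argument is executed carefully: the key point that $N(\pi)$ lies in $R$ and divides $((p')^{(r)})^{r^n}$ \emph{in $R$}, so that the divisibility can be pulled back through the isomorphism $F_r\colon S\to R$, is handled correctly.

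The paper's approach is quite different. It invokes the preceding lemma on semi-invariants: the group $(\hele_r)^n$ acts on $S$ by scaling each $t_i$ by an $r$-th root of unity, the polynomials $p^{(r)}$ and $q^{(r)}$ are invariants for this action, and the earlier lemma forces their gcd to be semi-invariant. The paper then argues, via a short factorisation into monomial times monomial-free parts, that the character must be trivial, so the gcd lies in the image of $F_r$; pulling back through $F_r$ finishes as in your descent step. Your route bypasses the group action entirely, replacing character theory with the norm of the integral extension $R\subseteq S$ and Cayley--Hamilton. This has the advantage you note at the end: it works uniformly with no need for $\kk$ to contain primitive $r$-th roots of unity (the paper's argument tacitly uses such roots to define the $(\hele_r)^n$-action, though one can of course pass to a field extension since gcds are stable under that). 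The paper's argument, on the other hand, fits the surrounding narrative of semi-invariants and is slightly shorter once that framework is in place.
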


\begin{proof}
The group $(\hele_r)^n$ acts on the polynomial ring, and
$p^{(r)}$ and $q^{(r)}$ are invariants. 
Note that any semi-invariant polynomial for this group has the form
$m \cdot c^{(r)}$ for some monomial $m = x_1^{a_1} \cdots x_n^{a_n}$
where each $0 \leq a_i < r$ and this monomial is uniquely determined by the 
character. Write $p = m_1 p_1$ where $m_1$ is a monomial and $p_1$ does not
have any monomial as a factor, and similarly $q = n_1 q_1$. Let 
$m c^{(r)}$ be the greatest common divisor of $p^{(r)}$ and $q^{(r)}$ 
where $m$ is a monomial and $c^{(r)}$ does not have a monomial factor. Then 
$m$ divides $m_1^{(r)}$ and $n_1^{(r)}$, and $m_1^{(r)}/m$ and $n_1^{(r)}/m$
are semi-invariants with the same character. If this character is non-trivial
they will have a common monomial factor. But this is impossible by choice of $c$. Hence
$m$ is also an invariant.
\end{proof}

%Their common divisor is therefore
%a semi-invariant. For this group it must therefore have the form
%$m \cdot c^{(r)}$ for some monomial $m$ and polynomial $c$. We may assume
%$c$ does not have a monomial factor.
%Let $p^\prime$ be the quotient of $p^{(r)}$ by this common divisor.
%Then $p^\prime$ will also be a semi-invariant and similarly we get 
%a $q^\prime$.
%But since $p^\prime$ and $q^\prime$ are relatively prime, 
%one of them must be of the form $c^{\prime (r)}$. But then $m$
%must be of the form $n^{(r)}$ for some
%monomial $n$.
%\end{proof}

\begin{lemma} \label{SchurLemSyFelles}
The greatest common divisor of two symmetric polynomials in
$\kk[t_1, \ldots, t_n]$ is also a symmetric polynomial.
\end{lemma}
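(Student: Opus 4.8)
\textbf{Proof plan for Lemma \ref{SchurLemSyFelles}.}
The plan is to play the symmetric group $S_n$ acting on $\kk[t_1,\ldots,t_n]$ off against the first lemma of this subsection. If $p$ and $q$ are symmetric, they are in particular semi-invariants for $G = S_n$ (with trivial character), so by the first lemma their greatest common divisor $b$ is a semi-invariant: $\sigma.b = \mu(\sigma)b$ for some character $\mu: S_n \pil \kk^\times$. The task is then to rule out the nontrivial characters. The only nontrivial character of $S_n$ (in characteristic $\neq 2$) is the sign character, so the only alternative to $b$ being symmetric is that $b$ is \emph{alternating}, i.e. $\sigma.b = \sgn(\sigma) b$.

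First I would recall the standard fact that any alternating polynomial is divisible by the Vandermonde determinant $D = \Pi_{i<j}(t_j - t_i)$: applying the transposition swapping $t_i$ and $t_j$ shows $t_i - t_j$ divides $b$, and since the linear forms $t_j - t_i$ are pairwise non-associate irreducibles, their product $D$ divides $b$. Write $b = D \cdot b'$; then $b'$ is symmetric since $D$ is alternating and $b$ is alternating. Now the key step: $b$ divides the symmetric polynomial $p$, so $D \mid p$; but applying the transposition swapping $t_i, t_j$ to $p$ fixes $p$, while it would have to send $t_i - t_j \mapsto -(t_i-t_j)$ — more cleanly, $D \mid p$ with $p$ symmetric forces $D^2 \mid p$ is false in general, so instead I argue directly: if $D \mid p$ then $p/D$ is alternating, but that is not yet a contradiction. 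The cleanest route is: since $b = D b'$ divides $p$ and $p$ is symmetric, apply any transposition $\tau$; then $\tau.b = -b$ also divides $\tau.p = p$, so both $b$ and $-b$ divide $p$, which is automatic and gives nothing. So the real argument must use that $b$ is the \emph{greatest} common divisor.

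Here is the fix, which is the heart of the matter. Suppose $b$ is alternating, so $b = D b'$ with $b'$ symmetric. Since $D b' \mid p$ and $p$ is symmetric, and since $b'$ is symmetric while $D$ is not, I want to deduce $b' \mid p$ and hence $b' \mid q$, contradicting that $D b'$ was the gcd (as $b'$ is not a larger common divisor — wait, it is smaller). So the contradiction must go the other way: I should show that if $b' \mid p$ and $b' \mid q$ with $b'$ symmetric, then actually the gcd is at least $b'$, which it is, consistent with $b = D b'$. The genuine point is different: $D b' \mid p$ and $D b' \mid q$ means $D$ divides both $p/b'$ and $q/b'$. But $p/b'$ and $q/b'$ are symmetric (quotient of symmetric by symmetric), and a symmetric polynomial divisible by the alternating polynomial $D$ must be divisible by $D^2$? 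No. Let me instead invoke: a \emph{symmetric} polynomial cannot be divisible by $D$ unless it is zero, because $D$ is alternating and if $D \mid f$ with $f$ symmetric then $f/D$ is alternating hence divisible by $D$, giving $D^2 \mid f$, and iterating forces $f = 0$ by degree. Applying this to the symmetric polynomials $p/b'$ and $q/b'$: they are divisible by $D$, hence zero, hence $p = q = 0$, excluded. Therefore $b$ is symmetric. I would write this up carefully, noting the characteristic-$2$ caveat makes no difference here since in characteristic $2$ the sign character is trivial and $b$ is automatically symmetric. The main obstacle is getting this divisibility chain phrased correctly; once one sees that "symmetric and divisible by $D$ implies zero" (via the infinite-descent-by-degree argument), the rest is immediate.
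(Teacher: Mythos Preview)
Your setup is exactly the paper's: use the semi-invariance lemma, note the only nontrivial $S_n$-character is the sign, and suppose for contradiction that the gcd $b$ is alternating, hence divisible by $D = \prod_{i<j}(t_j - t_i)$. But your contradiction step is wrong. The assertion ``a symmetric polynomial divisible by $D$ must be zero'' is false: $D^2$ is symmetric, nonzero, and divisible by $D$. Your descent argument only gives one step --- from $D \mid f$ with $f$ symmetric you correctly get $f/D$ alternating and hence $D^2 \mid f$ --- but then $f/D^2$ is symmetric again and there is no reason $D$ should divide it, so the iteration stalls.

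The paper closes the gap by induction on degree rather than infinite descent on a single polynomial. Once $b$ alternating gives $D \mid b \mid p$ and $D \mid q$, one has $p/D$ and $q/D$ alternating, hence each again divisible by $D$, so $D^2 \mid p$ and $D^2 \mid q$. Now $p/D^2$ and $q/D^2$ are \emph{symmetric} polynomials of strictly smaller degree, so by the induction hypothesis their gcd $f'$ is symmetric, and $b = D^2 f'$ is then symmetric (as $D^2$ is symmetric) --- contradicting that $b$ was alternating and nonzero. Your writeup was already circling this idea (you noted $p/b'$ and $q/b'$ are symmetric and divisible by $D$, hence by $D^2$); you just need to replace the false ``hence zero'' with the inductive step.
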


\begin{proof}
The symmetric group $S_n$ has two characters, the trivial one and the sign of
the permutation. If the greatest common divisor $f$ is not symmetric then 
$\sigma \cdot f = (-1)^{\sgn(\sigma)} f$. Hence $f$ is divisible by $t_i - t_j$ 
for each pair $i < j$ and so by $D = \Pi_{i < j} (t_j - t_i)$. But then both
$p/D$ and $q/D$ are semi-invariants with the sign character, and so are again 
divisible by $D$. Thus $f = D^2 f^\prime$ where $f^\prime$ is a greatest common divisor
of $p/D^2$ and $q/D^2$. By induction on degree we may assume that $f^\prime$ is 
symmetric.
\end{proof}

\subsection{Common divisors of Schur polynomials}

%For a symmetric polynomial $f$ in $\kk[t_1, \ldots, t_n]$,
%we may write $f = (t_1 \cdots t_n)^{m} f^\prime$ where $f^\prime$ does
%not have any variable as a factor. Write further 
%$f^\prime = $ 

For a polynomial $f$ in $\kk[t_1, \ldots, t_n]$ write
\[ f = t_1^N \overline{f} + \mbox{ lower terms in }t_1  + 
t_1^n \underline{f}\]
where the last term is the one with the smallest power of $t_1$.
The polynomials $\overline{f}$ and $\underline{f}$ are in 
$\kk[t_2, \ldots, t_n]$.
Note that if $f = gh$ then $\overline{f} = \overline{g} \overline{h}$
and $\underline{f} = \underline{g} \underline{h}$.
For a partition $\la = (\la_1, \ldots, \la_n)$ let 
\begin{eqnarray}
\overline{\la} & = & (\la_2, \ldots, \la_n) \label{SchurLigOver}\\
\underline{\la} & = & (\la_1-\la_n, \ldots, \la_{n-1} - \la_n). \notag
\end{eqnarray}
By the way Schur polynomials are derived from semi-standard Young tableaux,
we see that
\[ \overline{s_\la} = s_{\overline \la}, \quad \underline{s_\la} 
= (t_2\cdots t_n)^{\la_n} s_{\underline{\la}}. \]

\begin{example} \label{schurEksS}
Let $n=3$ and $\lambda = (4,2,1)$. Then
\begin{eqnarray*}
s_{4,2,1} &=& t_1^4t_2^2t_3 + t_1^4t_3^2t_2 + t_2^4t_3^2t_1 + 
               t_2^4t_1^2t_3 + t_3^4t_1^2t_2 + t_3^4t_2^2t_1 \\
    &+& t_1^3t_2^3t_3 + t_1^3t_3^3t_2 + t_2^3t_3^3t_1 +
2t_1^3t_2^2t_3^2 + 2t_2^3t_1^2t_2^2 + 2t_3^3t_1^2t_2^2.
\end{eqnarray*}
We get 
\begin{eqnarray*} 
\overline{s_{4,2,1}} &=& t_2^2t_3 + t_3^2t_2 = s_{2,1} \\
\underline{s_{4,2,1}} &=& t_2^4t_3^2 + t_3^4t_2^2 + t_2^3t_3^3 = (t_2t_3)s_{3,1}
\end{eqnarray*}
\end{example}

We shall use the notation 
\[ \xi_a(t_1,t_2) = t_1^{a-1} + t_1^{a-2}t_2 + \cdots + t_2^{a-1} \]
which factors as $\Pi_\omega (t_1 - \omega t_2)$ where the product is 
over all $a$'th roots of $1$ except $1$ itself. Note that this is equal to the 
Schur polynomial $s_{a-1,0}$. Finally let
\[ \rho = (n-1,n-2, \ldots, 1,0), \quad \rho^\prime = (n-2, n-3, \ldots, 1,0).\]

\begin{lemma} \label{SchurLemFrfaktor} 
Let $f$ be a symmetric polynomial having a non-trivial common factor 
with $s_{r\rho - \rho}$. Then $s_{p\rho - \rho}$ will divide $f$
for some divisor $p \geq 2$ of $r$.
\end{lemma}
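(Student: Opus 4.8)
The plan is to analyze the factorization of $s_{r\rho-\rho}$ and then leverage the fact (Lemma~\ref{SchurLemSyFelles}) that the greatest common divisor of symmetric polynomials is symmetric. First I would recall that $s_{r\rho-\rho}$ is, by Lemma~\ref{linbettiLemMat} in the case $\bfe = (r,r,\ldots,r)$, the determinant $|t_i^{r(n-j)}|$ divided by $D$, and that this numerator is a Vandermonde-type determinant in the variables $t_i^r$, hence equals $D^{(r)} = \Pi_{i<j}(t_j^r - t_i^r)$. So $s_{r\rho-\rho} = \Pi_{i<j}(t_j^r - t_i^r)/\Pi_{i<j}(t_j-t_i) = \Pi_{i<j}\xi_r(t_j,t_i)$ with the notation introduced above. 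Since $\xi_r$ factors as $\Pi_\omega(t_1-\omega t_2)$ over $r$-th roots of unity $\omega\neq 1$, we obtain a complete factorization of $s_{r\rho-\rho}$ into linear forms $t_j - \omega t_i$ for $i<j$ and $\omega$ ranging over nontrivial $r$-th roots of unity.

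Next I would set $b = \gcd(f, s_{r\rho-\rho})$; by Lemma~\ref{SchurLemSyFelles} this $b$ is symmetric, and it is a nontrivial divisor of $s_{r\rho-\rho}$ by hypothesis. So $b$ is a product of some subcollection of the linear forms $t_j-\omega t_i$. The key point is to determine which $S_n$-stable subcollections of these linear forms can occur. The symmetric group permutes the set $\{t_j - \omega t_i : i<j,\ \omega^r=1,\ \omega\neq 1\}$ (a transposition $(i\,j)$ sends $t_j-\omega t_i$ to $t_i - \omega t_j = -\omega(t_j - \omega^{-1}t_i)$, up to unit); I would work out that the orbits of $S_n$ on the linear factors are indexed precisely by the subsets $\{\omega,\omega^{-1}\}$ of nontrivial $r$-th roots of unity, or rather — more usefully — that any symmetric product of such linear forms must, for each pair $i<j$, contain $t_j-\omega t_i$ exactly when it contains $t_j - \omega' t_i$ for $\omega'$ in the same ``closure'' under the operations forced by symmetry. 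Tracking this carefully, a symmetric divisor of $\Pi_{i<j}\xi_r(t_j,t_i)$ must itself be of the form $\Pi_{i<j}\big(\Pi_{\omega\in T}(t_j-\omega t_i)\big)$ for a subset $T$ of nontrivial $r$-th roots of unity that is closed under $\omega\mapsto\omega^{-1}$ and, crucially, under the group operations that symmetry imposes — which forces $T\cup\{1\}$ to be a subgroup, i.e.\ the group of $p$-th roots of unity for some divisor $p$ of $r$. Then $\Pi_{i<j}\Pi_{\omega:\,\omega^p=1,\omega\neq1}(t_j-\omega t_i) = \Pi_{i<j}\xi_p(t_j,t_i) = s_{p\rho-\rho}$, and since $b$ is a nontrivial divisor we get $p\geq 2$; as $b$ divides $f$, so does $s_{p\rho-\rho}$.

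\textbf{Main obstacle.} The hard part will be the combinatorial-group-theoretic step: showing that an $S_n$-invariant product of the linear forms $t_j-\omega t_i$ corresponds exactly to $T\cup\{1\}$ being a \emph{subgroup} of the $r$-th roots of unity, rather than merely a symmetric subset. One must check how $S_n$ acts — in particular that a $3$-cycle forces closure under multiplication of roots of unity (the relation among $t_k - \omega t_i$, $t_k - \omega' t_j$, $t_j - \omega'' t_i$ that appears when one composes transpositions), so that any symmetric subcollection is forced to be ``multiplicatively closed'' in the appropriate sense. Once that is pinned down the identification with $s_{p\rho-\rho}$ and the conclusion $p \mid r$, $p\geq 2$ are immediate. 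A secondary (routine) point to verify is the identity $\Pi_{i<j}(t_j^r-t_i^r) = |t_i^{r(n-j)}|$ and its division by $D$, which is just the Vandermonde determinant evaluated at $t_i \mapsto t_i^r$.
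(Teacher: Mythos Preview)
Your route is genuinely different from the paper's, and the step you single out as the ``main obstacle'' does not go through. The action of $\sigma\in S_n$ on a linear factor sends $[t_j-\omega t_i]$ to $[t_{\sigma(j)}-\omega t_{\sigma(i)}]$, which after rewriting with the smaller index first is $[t_{j'}-\omega^{\pm 1}t_{i'}]$; no new root of unity is ever produced. In particular a $3$-cycle simply permutes the three index pairs among $\{1,2,3\}$ and at most replaces $\omega$ by $\omega^{-1}$ --- the linear relation you allude to among $t_k-\omega t_i$, $t_k-\omega' t_j$, $t_j-\omega'' t_i$ (which holds when $\omega=\omega'\omega''$) is irrelevant to divisibility, just as $t_1+t_2$ need not divide a polynomial divisible by both $t_1$ and $t_2$. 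Consequently the $S_n$-orbits on the linear factors of $s_{r\rho-\rho}$ are indexed by the unordered pairs $\{\omega,\omega^{-1}\}$, and \emph{any} inversion-closed subset $T$ of the nontrivial $r$-th roots gives a symmetric divisor $\prod_{i<j}\prod_{\omega\in T}(t_j-\omega t_i)$. For instance $b=\prod_{i<j}(t_i^2-t_it_j+t_j^2)$ (corresponding to $T=\{\zeta_6,\zeta_6^{-1}\}$) is symmetric and divides $s_{6\rho-\rho}$, but $T\cup\{1\}$ is not a subgroup of $\mu_6$.

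The paper does not analyse $\gcd(f,s_{r\rho-\rho})$ at all. It fixes a single common linear factor $t_1-\omega t_2$ (with $\omega$ of exact order $p$), expands $f=\sum_{\bfa\in\hele^{n-2}}t_3^{a_3}\cdots t_n^{a_n}\,p_{\bfa}(t_1,t_2)$, observes that each coefficient $p_{\bfa}$ is symmetric in $t_1,t_2$ with integer coefficients, and deduces that $\xi_p(t_1,t_2)$ divides every $p_{\bfa}$ and hence $f$; the full $S_n$-symmetry of $f$ then upgrades this to $s_{p\rho-\rho}\mid f$. The essential input beyond what you use is the \emph{integrality} of the $p_{\bfa}$, which brings in the Galois conjugates of $\omega$ --- symmetry alone is not enough. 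If you wish to rescue your approach you must at minimum add that $b$ has rational coefficients (forcing $T$ to be Galois-stable as well as inversion-closed), and even then the passage from ``$t_1-\omega t_2$ divides'' to ``$\xi_p$ divides'' is delicate when $p$ is not prime and deserves a careful argument.
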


\begin{proof}
We have
\[ s_{r\rho - \rho} = \Pi_{i<j} (t_i^r - t_j^r)/\Pi_{i < j} (t_i - t_j)
= \Pi_{i<j} \xi_{r} (t_i,t_j). \]
Suppose, say, $t_1 - \omega t_2$ is a common factor where $\omega \neq 1$
is a primitive  $p$'th root of unity where $p \geq 2$ divides $r$.
Writing 
\[ f = \Sigma_{\bfa \in \hele^{n-2}} t_3^{a_3} \cdots t_n^{a_n}
p_{\bfa}(t_1,t_2) \]
where the $p_{\bfa}(t_1,t_2)$ are symmetric polynomials over $\hele$,
we see that $t_1 - \omega t_2$ is a factor of each $p_{\bfa}(t_1, t_2)$. 
Hence 
$\xi_{p}(t_1,t_2)$ is a factor of $f$. Since $f$
is symmetric, all $\xi_{p}(t_i,t_j)$ must divide $f$ and so
$s_{p\rho - \rho}$ will divide $f$.
\end{proof}

\begin{lemma} \label{SchurLemER}
If $r$ is a common divisor of $\la_1, \ldots, \la_n$, write
$\la = r \cdot \la^\prime$ for a partition $\la^\prime$.
Then 
\[ s_{\la - \rho} = s_{\la^\prime - \rho}^{(r)} \cdot s_{r\rho - \rho}. \]
\end{lemma}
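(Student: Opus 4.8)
The plan is to prove the identity $s_{\la - \rho} = s_{\la^\prime - \rho}^{(r)} \cdot s_{r\rho - \rho}$ by working directly with the alternant (bialternant, or Jacobi--Trudi) formula for Schur polynomials, $s_{\mu} = |t_i^{\mu_j + n - j}| / |t_i^{n-j}|$, rather than with semistandard tableaux. First I would write the left-hand side as a ratio of determinants: $s_{\la - \rho} = |t_i^{(\la_j - (n-j)) + (n-j)}| / D = |t_i^{\la_j}| / D$, where $D = \Pi_{i<j}(t_j - t_i)$ is the Vandermonde determinant. Since $\la = r\la^\prime$, the numerator is $|t_i^{r\la^\prime_j}| = |(t_i^r)^{\la^\prime_j}|$, i.e. it is the numerator of the bialternant formula for $s_{\la^\prime - \rho}$ but with each $t_i$ replaced by $t_i^r$ — which is exactly the definition of the operation $(-)^{(r)}$ applied to the polynomial $|t_i^{\la^\prime_j}|$.

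So the identity reduces to the clean statement about Vandermonde-type determinants: $|t_i^{r\la^\prime_j}| = |(t_i^r)^{\la^\prime_j}|^{\phantom{(r)}}$ equals $\bigl(|t_i^{\la^\prime_j}|\bigr)^{(r)}$, and then dividing by $D$ one gets
\[
s_{\la - \rho} \;=\; \frac{|t_i^{r\la^\prime_j}|}{D} \;=\; \frac{\bigl(|t_i^{\la^\prime_j}|\bigr)^{(r)}}{D^{(r)}} \cdot \frac{D^{(r)}}{D} \;=\; s_{\la^\prime - \rho}^{(r)} \cdot \frac{D^{(r)}}{D}.
\]
Here I have used that $(-)^{(r)}$ is a ring homomorphism (it substitutes $t_i \mapsto t_i^r$), so it commutes with forming determinants and with division when the quotient is a polynomial; in particular $\bigl(|t_i^{\la^\prime_j}|/D\bigr)^{(r)} = |t_i^{\la^\prime_j}|^{(r)}/D^{(r)}$ because $s_{\la^\prime-\rho}$ is a genuine polynomial. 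It then remains only to identify the correction factor $D^{(r)}/D$ with $s_{r\rho - \rho}$. But $D^{(r)} = \Pi_{i<j}(t_j^r - t_i^r)$, so $D^{(r)}/D = \Pi_{i<j}(t_j^r - t_i^r)/(t_j - t_i) = \Pi_{i<j}\xi_r(t_i,t_j)$, and this is exactly the product expression for $s_{r\rho-\rho}$ recorded in the proof of Lemma \ref{SchurLemFrfaktor} (equivalently, $s_{r\rho-\rho} = |t_i^{r(n-j)}|/|t_i^{n-j}| = D^{(r)}/D$ directly from the bialternant formula applied to $\mu = r\rho - \rho$, since $(r\rho-\rho)_j + (n-j) = r(n-j)$).

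I do not expect a serious obstacle here; the only point requiring a word of care is the justification that $(-)^{(r)}$ may be pulled through the division by $D$, which is legitimate precisely because both $s_{\la^\prime-\rho}$ and $s_{r\rho-\rho}$ are polynomials (not just rational functions), so all the intermediate expressions live in $\kk[t_1,\dots,t_n]$ and the homomorphism $F_r$ applies termwise without ambiguity. An alternative, entirely combinatorial, route would be to prove the identity by a bijection on semistandard Young tableaux — decomposing a filling of shape $\la^\prime - \rho$ (scaled) plus a filling recording the "staircase" contribution — but the bialternant computation above is shorter and more transparent, so that is the route I would take. If one prefers to avoid invoking Lemma \ref{SchurLemFrfaktor}, the identification $s_{r\rho-\rho} = D^{(r)}/D$ is immediate from the definition of $s_\la$ given earlier in the paper.
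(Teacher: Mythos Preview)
Your proposal is correct and is essentially the same argument as the paper's: both use the bialternant formula $s_{\la-\rho} = |t_i^{\la_j}|/|t_i^{n-j}|$, insert the intermediate determinant $|t_i^{r(n-j)}|$, and identify the two resulting ratios as $s_{\la'-\rho}^{(r)}$ and $s_{r\rho-\rho}$. The paper compresses this into a single line, while you spell out the role of $D^{(r)}/D$ and the compatibility of $(-)^{(r)}$ with the determinant quotient, but the content is identical.
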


\begin{proof}
The following short argument was brought to our attention by J.Weyman
and S.Sam.
\[ s_{\la - \rho} = \frac{|t_i^{\la_j}|}{|t_i^{n-j}|}
= \frac{|t_i^{\la_j}|}{|t_i^{r(n-j)}|} \cdot \frac{|t_i^{r(n-j)}|}
{|t_i^{n-j}|} = s_{\la^\prime - \rho}^{(r)} \cdot  s_{r\rho - \rho}. \]
\end{proof}

\begin{lemma} \label{SchurLemFrRelprim}
For any $\la$ and $r$, the polynomials $s_{\la}^{(r)}$ and 
$s_{r\rho - \rho}$ are relatively prime.
\end{lemma}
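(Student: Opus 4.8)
\emph{Strategy.} The plan is to argue by contradiction. Suppose $s_\lambda^{(r)}$ and $s_{r\rho-\rho}$ had a non‑trivial common factor. I will show this forces a linear form $t_1-\omega t_2$, with $\omega$ a root of unity satisfying $\omega^r=1$, to divide $s_\lambda^{(r)}$; substituting $t_1=\omega t_2$ then collapses $t_1^r$ and $t_2^r$ and shows that $s_\lambda$, with its first two variables set equal, is the zero polynomial — which is false.

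\emph{The reduction.} First I would note that, $s_\lambda$ being symmetric, so is $s_\lambda^{(r)}$ (substituting $t_i\mapsto t_i^r$ preserves symmetry). Hence Lemma \ref{SchurLemFrfaktor} applies with $f=s_\lambda^{(r)}$: a non‑trivial common factor with $s_{r\rho-\rho}$ forces $s_{p\rho-\rho}\mid s_\lambda^{(r)}$ for some divisor $p\geq 2$ of $r$. Since $s_{p\rho-\rho}=\prod_{i<j}\xi_p(t_i,t_j)$ and $\xi_p(t_1,t_2)=\prod_\omega(t_1-\omega t_2)$ over the $p$‑th roots of unity $\omega\neq 1$, it follows that some linear form $t_1-\omega t_2$ with $\omega^p=1$, $\omega\neq 1$, divides $s_\lambda^{(r)}$. (Alternatively, one can bypass Lemma \ref{SchurLemFrfaktor}: using the symmetry of $s_\lambda^{(r)}$ to permute variables, the common factor may be taken to divide $\xi_r(t_1,t_2)$, hence to divide $t_1^r-t_2^r=\prod_{\zeta^r=1}(t_1-\zeta t_2)$, yielding a factor $t_1-\omega t_2$ with $\omega^r=1$, $\omega\neq 1$.) Either way the key outcome is a root of unity $\omega$ with $\omega^r=1$ and $(t_1-\omega t_2)\mid s_\lambda^{(r)}$.

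\emph{The contradiction, and the only delicate point.} Divisibility by $t_1-\omega t_2$ means $s_\lambda^{(r)}$ vanishes under $t_1=\omega t_2$. Because $(\omega t_2)^r=t_2^r$, this value equals
\[ s_\lambda\big((\omega t_2)^r,t_2^r,t_3^r,\ldots,t_n^r\big)=s_\lambda(t_2^r,t_2^r,t_3^r,\ldots,t_n^r). \]
So everything comes down to the assertion that $s_\lambda(z_1,z_1,z_2,\ldots,z_{n-1})$ is not the zero polynomial; then, since $z_1\mapsto t_2^r$, $z_j\mapsto t_{j+1}^r$ is an injective substitution on polynomials, $s_\lambda(t_2^r,t_2^r,t_3^r,\ldots,t_n^r)\neq 0$, contradicting the above. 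For the non‑vanishing I would invoke the combinatorial description of the Schur polynomial as a sum, with positive coefficients, over semi‑standard Young tableaux of shape $\lambda$ with entries in $\{1,\ldots,n\}$: such tableaux exist since $\lambda$ has at most $n$ parts, so setting $z_1=z_2$ causes no cancellation and the specialization stays non‑zero (equivalently, its value at all variables equal to $1$ is $\dim S_\lambda\neq 0$). I do not expect a serious obstacle; the one point requiring care is that the extracted linear factor genuinely satisfies $\omega^r=1$ (so that the substitution collapses $t_1^r$ and $t_2^r$), and that roots of unity behave as in characteristic zero, as is assumed throughout this section.
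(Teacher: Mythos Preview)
Your argument is correct and takes a genuinely different route from the paper's.

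The paper proves the lemma by induction on $n$. The base case $n=2$ is handled directly: one must show that $(t_1^{ar}-t_2^{ar})/(t_1^r-t_2^r)$ and $(t_1^r-t_2^r)/(t_1-t_2)$ are coprime, which follows since $t_1^{ar}-t_2^{ar}$ is squarefree. For $n\geq 3$, if $f$ is a nontrivial common factor then by Lemma~\ref{SchurLemFrfaktor} some $s_{p\rho-\rho}$ with $p\geq 2$ divides $f$; but applying the $\overline{\,\cdot\,}$ operation (extracting the top $t_1$-coefficient) sends $s_\lambda^{(r)}$ to $s_{\overline\lambda}^{(r)}$ and $s_{r\rho-\rho}$ to $s_{r\rho'-\rho'}$, whose gcd is $1$ by induction, so $\overline f=1$ --- contradicting that $s_{p\rho'-\rho'}$ should divide it.

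Your approach dispenses with the induction entirely. Once Lemma~\ref{SchurLemFrfaktor} yields a linear factor $t_1-\omega t_2$ with $\omega^r=1$, the substitution $t_1\mapsto\omega t_2$ identifies $t_1^r$ with $t_2^r$, so vanishing of $s_\lambda^{(r)}$ would force $s_\lambda$ to vanish when its first two arguments coincide; the monomial positivity of Schur polynomials rules this out immediately. This is shorter and conceptually cleaner. The paper's inductive $\overline{\,\cdot\,}/\underline{\,\cdot\,}$ technique, on the other hand, is the workhorse for the neighboring Lemma~\ref{SchurLemLaRRelprim} and Theorem~\ref{SchurTheFelles}, where no such positivity shortcut is available, so its use here keeps the section methodologically uniform.
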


\begin{proof}
If $n=2$ then 
\[ s_{\la_1, \la_2}^{(r)} = (t_1t_2)^{\la_2 r} \cdot s_{\la_1 - \la_2,0}^{(r)}. \]
So we must show that for any $a$, the polynomials $s_{a-1,0}^{(r)}$ and
$s_{r-1,0}$ are relatively prime. These polynomials are
\[ \frac{t_1^{ar}- t_2^{ar}}{t_1^r - t_2^r} \mbox{ and }
\frac{t_1^r - t_2^r}{t_1 - t_2}. \]
Since $t_1^{ar}- t_2^{ar}$ does not have any multiple factors, these
are relatively prime.

 Let now $n \geq 3$. 
If the polynomials in the statement have a greatest
common divisor $f$, 
then $\overline{f}$ is a common divisor of 
of $\overline{s_{\la}^{(r)}} = \overline{s_{\la}}^{(r)}
= s_{\overline{\la}}^{(r)}$ and $\overline{s_{r\rho - \rho}}
= s_{r\rho^\prime - \rho^\prime}$.
By induction the greatest common divisor of these is $1$. If $f$ is not
$1$ it has by Lemma \ref{SchurLemFrfaktor} a  factor of the form  
$s_{p\rho- \rho}$ for some $p \geq 2$ dividing $r$. But then 
$s_{p \rho^\prime - \rho^\prime}$ would be a factor of $\overline{f} = 1$. 
\end{proof}

\begin{lemma} \label{SchurLemLaRRelprim} Suppose $\la - \rho$ is non-negative
and $r$ is relatively prime to at least one $\la_i - \la_{i+1}$
where $1 \leq i \leq n-1$. Then $s_{\la - \rho}$ and $s_{r\rho - \rho}$
are relatively prime.
\end{lemma}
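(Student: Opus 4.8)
The plan is to reduce to the relatively-prime statement already established in Lemma \ref{SchurLemFrRelprim} by peeling off the common factor of $s_{\la - \rho}$ coming from the common divisors of the $\la_i$. First I would note that by Lemma \ref{SchurLemSyFelles} any greatest common divisor $f$ of $s_{\la-\rho}$ and $s_{r\rho-\rho}$ may be taken symmetric, and that $s_{r\rho-\rho} = \Pi_{i<j}\xi_r(t_i,t_j)$ factors into the cyclotomic-type pieces $\xi_p(t_i,t_j)$ for divisors $p\geq 2$ of $r$. So if $f$ is non-trivial, Lemma \ref{SchurLemFrfaktor} forces $s_{p\rho-\rho}$ to divide $s_{\la-\rho}$ for some $p\geq 2$ dividing $r$. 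Thus it suffices to show: for each such $p$, $s_{p\rho-\rho}$ does \emph{not} divide $s_{\la-\rho}$, given that $p\mid r$ is relatively prime to some $\la_i - \la_{i+1}$ (note $p$ inherits coprimality to that same $\la_i-\la_{i+1}$ since $p\mid r$).

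The heart of the argument is the following divisibility criterion. I would show that $\xi_p(t_1,t_2)$ — equivalently the factor $(t_1 - \omega t_2)$ for $\omega$ a primitive $p$-th root of unity — divides $s_{\la - \rho}$ if and only if $p$ divides every difference $\la_i - \la_{i+1}$ (equivalently, $p$ divides $(\la-\rho)_i - (\la-\rho)_{i+1} + 1$ for all $i$, but it is cleaner to phrase it directly in terms of $\la$). One direction is Lemma \ref{SchurLemER}: if $p \mid \la_i-\la_{i+1}$ for all $i$ then, after translating by a suitable power of $(t_1\cdots t_n)$, one writes $\la = \la_n\cdot(1,\dots,1) + p\cdot\mu$ for a partition $\mu$ and gets $s_{\la-\rho} = (t_1\cdots t_n)^{\la_n}\, s_{\mu-\rho}^{(p)}\cdot s_{p\rho-\rho}$, so $s_{p\rho-\rho}$ — hence $\xi_p(t_1,t_2)$ — divides it. For the converse, I would specialize: set $t_1 = \omega t_2$ in $s_{\la-\rho} = |t_i^{\la_j + n-j}| / |t_i^{n-j}|$. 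When $p$ does \emph{not} divide some $\la_i - \la_{i+1}$, I claim the numerator does not vanish identically under this specialization even though rows $1$ and $2$ of the denominator Vandermonde collapse; the cleanest route is the Jacobi–Trudi / bialternant manipulation, factoring $\xi_p(t_1,t_2)^{?}$ out of numerator and denominator and comparing multiplicities — the denominator contains $\xi_p(t_i,t_j)$ with multiplicity equal to the number of pairs $i<j$ with $p\mid j - i$ counted appropriately, while the numerator contains it with multiplicity counting pairs with $p \mid (\la_i + n - i) - (\la_j + n - j)$. If $p$ fails to divide some $\la_i-\la_{i+1}$ the numerator multiplicity is strictly smaller, so $\xi_p$ cannot survive in the quotient $s_{\la-\rho}$.

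The main obstacle I anticipate is making that multiplicity comparison rigorous and clean: one must show that $\xi_p(t_1,t_2)$ divides the bialternant numerator $|t_i^{\la_j+n-j}|$ to order exactly $\#\{(i,j): i<j,\ p\mid(\la_i+n-i)-(\la_j+n-j)\}$ and likewise for the denominator with $\la = 0$, and then check that the former count is $\geq$ the latter with equality iff $p$ divides all consecutive differences $\la_i - \la_{i+1}$. Grouping the exponents $\la_j + n - j$ by residue class mod $p$ turns this into a statement about Vandermonde-like sub-determinants within each residue block, which is elementary but needs care about which pairs contribute. Once this divisibility criterion is in hand, the lemma follows immediately: the hypothesis gives a $p\mid r$ with $p$ coprime to some $\la_i-\la_{i+1}$, so no $s_{p\rho-\rho}$ (for $p\geq 2$, $p\mid r$) divides $s_{\la-\rho}$, contradicting Lemma \ref{SchurLemFrfaktor}, and therefore $f = 1$.
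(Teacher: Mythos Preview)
Your reduction via Lemmas~\ref{SchurLemSyFelles} and~\ref{SchurLemFrfaktor} to the statement ``$s_{p\rho-\rho}\nmid s_{\lambda-\rho}$ for each $p\geq 2$ dividing $r$'' is correct and matches the paper. The divisibility criterion you aim for --- that $\xi_p(t_1,t_2)$ divides $s_{\lambda-\rho}$ iff $p$ divides every $\lambda_i-\lambda_{i+1}$ --- is also true. But the argument you sketch for its hard direction is off. The Vandermonde denominator $D=\prod_{i<j}(t_i-t_j)$ contains \emph{no} $\xi_p$-factor for $p\geq 2$: each $\xi_p(t_a,t_b)=(t_a^p-t_b^p)/(t_a-t_b)$ is coprime to $t_a-t_b$, so the multiplicity of $\xi_p(t_1,t_2)$ in $D$ is zero, not ``the number of pairs $i<j$ with $p\mid j-i$''. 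Relatedly, under $t_1=\omega t_2$ with $\omega\neq 1$ the rows of the denominator Vandermonde do \emph{not} collapse. So there is no multiplicity comparison to make: the question is simply whether the bialternant numerator $|t_i^{\lambda_j}|$ vanishes at $t_1=\omega t_2$, equivalently whether $s_{\lambda-\rho}(\omega t_2,t_2,t_3,\ldots,t_n)\equiv 0$. That is a genuine statement to prove, and your residue-class grouping does not obviously yield it (for instance, the Laplace expansion by rows $1,2$ has leading $t_2$-term $(\omega^{\lambda_1}-\omega^{\lambda_2})t_2^{\lambda_1+\lambda_2}M_{12}$, which settles the case $p\nmid\lambda_1-\lambda_2$ immediately, but the remaining cases need more).

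The paper takes a different route that sidesteps this specialization entirely: induction on $n$ via the leading/trailing $t_1$-coefficient maps $f\mapsto\overline{f}$ and $f\mapsto\underline{f}$. If $s_{p\rho-\rho}$ divided $s_{\lambda-\rho}$, then applying $\underline{\,\cdot\,}$ would force $s_{p\rho'-\rho'}$ to divide $s_{\underline{\lambda}-\rho'}$ (with $\underline{\lambda}_i-\underline{\lambda}_{i+1}=\lambda_i-\lambda_{i+1}$ for $i\leq n-2$), and applying $\overline{\,\cdot\,}$ would force $s_{p\rho'-\rho'}$ to divide $s_{\overline{\lambda}-\rho'}$ (with $\overline{\lambda}_i-\overline{\lambda}_{i+1}=\lambda_{i+1}-\lambda_{i+2}$). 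Whichever consecutive difference is coprime to $p$ survives in one of $\underline{\lambda}$ or $\overline{\lambda}$, so induction gives the contradiction. This is exactly the missing ``hard direction'' of your criterion, proved without any root-of-unity substitution.
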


\begin{proof}
If $n = 2$, then 
\[ s_{\la_1 - 1, \la_2} = (t_1t_2)^{\la_2} s_{\la_1 - \la_2 - 1, 0}. \]
This is relatively prime to $s_{r-1,0}$, since $\xi_{p}(t_1,t_2)$
and $\xi_{q}(t_1,t_2)$ are relatively prime when $p$ and $q$ are.

Let $n \geq 3$. If $s_{\la - \rho}$ and $s_{r \rho - \rho}$ have a non-trivial 
common 
factor they have a common factor $s_{p\rho - \rho}$ where $p \geq 2$ divides $r$. 
Suppose that $p$ is relatively prime to $\la_1 - \la_2$.  
Then $\underline{s_{p\rho- \rho}}$, which is 
$(t_2 \cdots t_n)^{p-1} \cdot s_{p\rho^\prime - \rho^\prime}$,
is a common factor of $\underline{s_{\la - \rho}}$ and $\underline{s_{r\rho- \rho}}$ 
which are respectively \[ (t_2 \cdots t_n)^{\la_n - 1} \cdot  
s_{\underline{\la} - \rho^\prime} \mbox{ and } 
(t_2 \cdots t_n)^{r-1} \cdot  s_{r\rho^\prime - \rho^\prime}. \]
Then $s_{p\rho^\prime - \rho^\prime}$ would have to be a common factor of 
$s_{\underline{\la} - \rho^\prime}$ and $s_{r\rho^\prime - \rho^\prime}$ which by 
induction is not possible.

Now assume that $p$ is relatively prime to $\la_i - \la_{i+1}$ for some 
$2 \leq i \leq n-1$. Then $\overline{s_{p\rho- \rho}} = s_{p \rho^\prime - 
\rho^\prime}$ is a common factor of $s_{\overline{\la} - \rho^\prime}$ and
$s_{r\rho^\prime - \rho^\prime}$. But by induction these two latter 
polynomials are relatively prime, so again we get a contradiction.
\end{proof}

Now we are ready to prove the main result of this subsection.

\begin{theorem} \label{SchurTheFelles}
Let $r$ be the greatest common divisor of $e_1, \ldots, e_n$. Then
$s_{r \rho - \rho}$ is the greatest common divisor of 
\begin{equation} \label{SchurLigSalfa}
s_{\alpha(\bfe, 0)},\, s_{\alpha(\bfe,1)}, \, \ldots, \,  s_{\alpha(\bfe, n)}.
\end{equation}
Letting $e_i = r e_i^\prime$ we have 
\[ s_{\alpha(\bfe, i)} = s_{r \rho - \rho} \cdot s_{\alpha(\bfe^\prime,i)}^{(r)}. \] 
\end{theorem}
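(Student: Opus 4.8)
The strategy is to first dispose of the easy identity and then prove the gcd statement by reducing to the relatively‑prime case handled by the lemmas already proved. Write $\la = \alpha(\bfe,i)$ for brevity. Recall from the definition \eqref{SetLigAlfa} that $\alpha(\bfe,i) - \rho$ has entries $(\sum_1^n e_j, \ldots, \sum_i^n e_j, \sum_{i+2}^n e_j, \ldots, \sum_n^n e_j, 0)$ up to the shift by $\rho$ made explicit in the proof of Lemma~\ref{linbettiLemMat}; since every $e_j = r e_j^\prime$, all these entries are divisible by $r$, so $\alpha(\bfe,i) - \rho = r \cdot (\alpha(\bfe^\prime,i) - \rho^{\,\prime\prime})$ for the analogous vector built from $\bfe^\prime$ — more precisely one checks directly that the ``$\la$'' of \eqref{SetLigAlfa} scales, giving $\alpha(\bfe,i) = r\cdot(\text{something}) $ after subtracting $\rho$. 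Then Lemma~\ref{SchurLemER}, applied with this $r$, yields immediately
\[ s_{\alpha(\bfe,i)} = s_{\alpha(\bfe^\prime,i)}^{(r)} \cdot s_{r\rho - \rho}, \]
which is the second, displayed, assertion of the theorem. (I should double‑check the indexing convention: the $s_{\alpha(\bfe,i)}$ in the theorem is shorthand for $s_{\alpha(\bfe,i) - \rho}$ in the notation of Section~2, since that is how Schur polynomials of the terms $S\te_k S_\la$ were normalized — this matches Lemma~\ref{linbettiLemMat}.)

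\textbf{The gcd statement.} Given the factorization just established, $s_{r\rho-\rho}$ is a common divisor of the family \eqref{SchurLigSalfa}. It remains to show it is the \emph{greatest} common divisor, i.e.\ that $\gcd_i s_{\alpha(\bfe^\prime,i)}^{(r)} = 1$. By Lemma~\ref{SchurLemSyFelles} this gcd is a symmetric polynomial, and by Lemma~\ref{SchurLemFrFelles} it equals $\big(\gcd_i s_{\alpha(\bfe^\prime,i)}\big)^{(r)}$; so it suffices to prove $\gcd_i s_{\alpha(\bfe^\prime,i)} = 1$, i.e.\ we may assume from the outset that $r=1$, that is, $\gcd(e_1,\ldots,e_n)=1$. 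Now suppose $f$ is a non‑trivial common factor of all the $s_{\alpha(\bfe,i)}$ in the case $r=1$. By Lemma~\ref{SchurLemSyFelles} we may take $f$ symmetric. Since $s_{\alpha(\bfe,0)} = s_{\la}$ with $\la - \rho$ having all entries multiples of nothing in particular, I want to play the $s_{p\rho-\rho}$ factor against the differences $\la_i - \la_{i+1}$. Reading off \eqref{SetLigAlfa}: for $\la = \alpha(\bfe,i)$ the consecutive differences $\la_j - \la_{j+1}$ of $\la - \rho$ are, up to the $+1$ coming from $\rho$, equal to $e_j$ for $j \le i$, to $0$ (shifted) at $j = i+1$, and to $e_{j+1}$‑type quantities for $j > i$ — in any case, for a suitable choice of $i$ and $j$ every individual $e_k$ appears as such a difference in \emph{some} $\alpha(\bfe,i)$.

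\textbf{Main obstacle and finish.} The crux is the combinatorial bookkeeping: I must show that because $\gcd(e_1,\ldots,e_n)=1$, there is \emph{no} prime power, equivalently no integer $p\ge 2$, that divides a suitable difference $\la_i-\la_{i+1}$ in \emph{every} member of the family — so that Lemma~\ref{SchurLemLaRRelprim} can be invoked to kill the hypothetical common factor. Concretely: by Lemma~\ref{SchurLemFrfaktor} any non‑trivial common factor $f$ forces some $s_{p\rho-\rho}$ with $p\ge 2$ to divide $f$, hence to divide $s_{\alpha(\bfe,i)}$ for all $i$. Pick $k$ with $p \nmid e_k$ (possible since $p \nmid \gcd(e_1,\ldots,e_n)$). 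Then choose the index $i$ for which $e_k$ occurs as one of the relevant consecutive differences $\la_{j}-\la_{j+1}$ of $\alpha(\bfe,i)-\rho$ (from the shape of \eqref{SetLigAlfa} this is a matter of tracking which block of partial sums changes by $e_k$); for that $i$, $p$ is relatively prime to that difference, so Lemma~\ref{SchurLemLaRRelprim} says $s_{\alpha(\bfe,i)-\rho}$ and $s_{p\rho-\rho}$ are relatively prime, contradicting that $s_{p\rho-\rho} \mid s_{\alpha(\bfe,i)}$. Hence $f$ is trivial, $\gcd_i s_{\alpha(\bfe,i)} = 1$ when $r=1$, and unwinding the reductions gives $\gcd_i s_{\alpha(\bfe,i)} = s_{r\rho-\rho}$ in general. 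The only genuinely delicate point is verifying the claim ``each $e_k$ is a consecutive difference in some $\alpha(\bfe,i)$'', which is a direct but slightly fiddly computation from \eqref{SetLigAlfa} together with $\la_i = \sum_{j=i+1}^n (e_j-1)$; everything else is an assembly of the preceding lemmas.
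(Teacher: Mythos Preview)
Your reduction to the case $r=1$ via Lemma~\ref{SchurLemFrFelles} and the derivation of the factorization from Lemma~\ref{SchurLemER} are fine, as is the observation that each $e_k$ appears as some consecutive difference $\la_j-\la_{j+1}$ of $\alpha(\bfe,i)+\rho$ (take $i=0$ if $k\ge 2$ and $i=n$ if $k=1$), so that the final contradiction via Lemma~\ref{SchurLemLaRRelprim} would go through.

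The gap is in the step ``by Lemma~\ref{SchurLemFrfaktor} any non-trivial common factor $f$ forces some $s_{p\rho-\rho}$ with $p\ge 2$ to divide $f$''. Lemma~\ref{SchurLemFrfaktor} does \emph{not} assert this: its hypothesis is that $f$ already shares a non-trivial factor with a specific $s_{r\rho-\rho}$, and its conclusion is that then $s_{p\rho-\rho}\mid f$ for some $p\mid r$. In the situation at hand (after reducing to $\gcd(e_i)=1$) there is no $s_{r\rho-\rho}$ in sight to play against, and there is no reason a priori that a symmetric common factor of the $s_{\alpha(\bfe,i)}$ must have an irreducible factor of cyclotomic type $t_i-\omega t_j$. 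So the crucial implication is simply unproved.

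The paper closes this gap by a genuinely different route: an induction on $n$. For $n\ge 3$ one sets $a=\gcd(e_1,\ldots,e_{n-1})$ and $b=\gcd(e_2,\ldots,e_n)$, uses Lemma~\ref{SchurLemER} to factor the two extreme polynomials $s_{\alpha(\bfe,0)}$ and $s_{\alpha(\bfe,n)}$, and invokes Lemma~\ref{SchurLemLaRRelprim} to show the common factor $f$ is coprime to both $s_{a\rho-\rho}$ and $s_{b\rho-\rho}$. One then passes to the leading and trailing coefficients in $t_1$ (the $\overline{\,\cdot\,}$ and $\underline{\,\cdot\,}$ operations), which sends the family $\{s_{\alpha(\bfe,i)}\}$ to analogous families in $n-1$ variables built from $\overline{\bfe}=(e_2,\ldots,e_n)$ and $\hat{\bfe}=(e_1,\ldots,e_{n-1})$; the inductive hypothesis together with Lemma~\ref{SchurLemFrRelprim} then forces $\overline{f}=\underline{f}=1$, whence $f=1$ by symmetry. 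This inductive mechanism is precisely what is missing from your argument.
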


%\begin{remark} It is a consequence of Lemma \ref{LigLemFr}, that
%if $e_i = r e_i^\prime$ then 
%\[ s_{\alpha(\bfe, i) = s_{r \rho - \rho} s_{\alpha(\bfe^\prime,i)}^{(r)} \] 
%and these latter factors do 
%\end{remark}

\begin{proof}
1. The last equation is by Lemma \ref{SchurLemER}. To show the statement, it
is by Lemma \ref{SchurLemFrFelles} enough to show that if the greatest
common divisor  $r = 1$, then the greatest common divisor of 
the $s_{\alpha(\bfe,i)}$'s is $1$. We do this by induction on $n$.
When $n = 2$, the Schur polynomials are
\[ s_{(e_2 - 1, 0)},\quad  s_{(e_1 + e_2 -1,0)}, \quad s_{(e_1 + e_2 -1, e_2)}
= (t_1t_2)^{e_2} s_{(e_1-1,0)}. \]
The first polynomial is $\xi_{e_2 }$ and the last polynomial is 
$\xi_{e_1 }$, and these are relatively prime when $e_1$ and $e_2$ are.

\medskip
\noindent 2. Suppose then $n \geq 3$. 
Let $a$ be the greatest common divisor of $e_1, \ldots, e_{n-1}$ and 
$b$ the greatest common divisor of $e_2, \ldots, e_n$. Then $a$ and $b$ are
relatively prime. Let $f$ be the greatest common divisor of the 
$s_{\alpha(\bfe, i)}$ for $i = 0,\ldots, n$. 
It is symmetric by Lemma \ref{SchurLemSyFelles}.
Also $f$ does not have any variables as a factor. Otherwise it would
be divisible by $t_1t_2 \cdots t_n$  but this does not go together with
$f$ dividing $s_{\alpha(\bfe,0)}$. 

By Lemma \ref{SchurLemER} note that 
\[ s_{\alpha(\bfe,0)} = s_{b \rho- \rho} \cdot s_{\alpha(\bfe^\prime,0)}^{(b)} \]
where $\bfe^\prime = (*, e_2/b, \ldots, e_n/b)$ (by Lemma \ref{linbettiLemMat}
the last factor
above does not depend on the first coordinate of $\bfe^\prime$).
Also 
\[ s_{\alpha(\bfe,n)} = s_{a \rho-\rho} \cdot (t_1\cdots t_n)^{e_n}
\cdot s_{\alpha(\bfe^{\prime\prime}, 0)}^{(a)} \]
where $\bfe^{\prime\prime} = (*, e_1/a, \ldots, e_{n-1}/a)$ (again by
Lemma \ref{linbettiLemMat}
the last factor above does not depend on the first coordinate of 
$\bfe^{\prime\prime}$). 
Now $f$ will be relatively prime to $s_{b\rho- \rho}$ and $s_{a\rho - \rho}$. 
This is so since $s_{b\rho- \rho}$ is relatively prime to $s_{\alpha(\bfe,n)}$
by Lemma \ref{SchurLemLaRRelprim} because 
$\alpha(\bfe,n)_1 - \alpha(\bfe,n)_2 + 1 = e_1$,
and since $s_{a\rho - \rho}$ is relatively prime to $s_{\alpha(\bfe,0)}$ by Lemma
\ref{SchurLemLaRRelprim} because $\alpha(\bfe,0)_{n-1} - \alpha(\bfe,n)_n + 1 
= e_n$.
Hence we may conclude that $f$ is a common factor of 
$s_{\alpha(\bfe^\prime,0)}^{(b)}$ and $s_{\alpha(\bfe^{\prime\prime}, 0)}^{(a)}$.

\medskip
\noindent 3. Now we consider $\overline{f}$. It divides
\[ \overline{s_{\alpha(\bfe^\prime,0)}^{(b)} } = 
\overline{s_{{\alpha(\bfe^\prime,0)}}}^{(b)} = 
s^{(b)}_{\overline{{\alpha(\bfe^\prime,0)}}} =
s_{\alpha(\overline{\bfe}^\prime , 0)}^{(b)} \]
where $\overline{\bfe}^\prime = (*,e_3/b, \ldots, e_n/b)$. 
But it also divides the $\overline{s_{\alpha(\bfe,i)}}$ and for $i \geq 1$ these
are by (\ref{SetLigAlfa}) and (\ref{SchurLigOver}) equal to
\[ s_{\alpha(\overline{\bfe},i)}, \quad i \geq 1 \]
where $\overline{\bfe} = (e_2, \ldots, e_n)$. By induction the greatest common divisor
of these polynomials is $s_{b\rho^\prime - \rho^\prime}$. We may then by Lemma 
\ref{SchurLemFrRelprim} conclude that $\overline{f} = 1$.
Since $f$ is symmetric we have
\[ f = t_1^m + \mbox{ lower terms in } t_1 + \underline{f} \]
where $\underline{f}$ 
does not have any variable as a factor.

\medskip 
\noindent 4. Now consider $\underline{f}$. 
We know that $f$ divides 
$s_{\alpha(\bfe^{\prime\prime}, 0)}^{(a)}$.
Note that 
\[ \underline{s_{\alpha(\bfe^{\prime \prime}, 0)}} = 
(t_2 \cdots t_n)^{{\bfe}^{\prime\prime}_n - 1} 
\cdot s_{\alpha(\underline{\bfe}^{\prime \prime},0)} \]
where $\underline{\bfe}^{\prime \prime} = (*, e_1/a, \ldots, e_{n-2}/a)$. 
Hence $\underline{f}$ divides $s_{\alpha(\underline{\bfe}^{\prime \prime},0)}^{(a)}$.

But $\underline{f}$ also divides
$\underline{s_{\alpha(\bfe, i)}}$ for $i = 0, \ldots, n-1$ which is 
\[ (t_2 \cdots t_n)^{e_n - 1}\cdot s_{\alpha(\hat{\bfe},i)} \]
where $\hat{\bfe} = (e_1, \ldots, e_{n-1})$. 
Hence $\underline{f}$ is a common factor of the $s_{\alpha(\hat{\bfe}, i)}$.
By induction their greatest common divisor is $s_{a\rho^\prime - \rho^\prime}$.
By Lemma \ref{SchurLemFrRelprim} we may now conclude that $\underline{f} = 1$. 
Since $f$ is symmetric we may further conclude that $f = 1$.
\end{proof}

\section{The linear space of diagrams fulfilling the Herzog-K\"uhl  equation}

\label{LinHKSec}

The theorem below provides a nice structural description of the $\rasj$-vector
space $L^\prime(\bfe)$ of diagrams fulfilling the HK-equations (\ref{SetLigHK}).
Recall again that $L^\prime(\bfe)$ does not depend on the field $\kk$. 
Using this theorem and Theorem \ref{SchurTheFelles}, which gives the common 
factor of the Schur polynomials, the proofs of our main Theorems
\ref{LinbettiTheMainEn} and \ref{LinbettiTheMainTo} are rather immediate.

\begin{theorem} \label{LinHKTheLprim}
Let $\bfe = (e_1, \ldots, e_n)$ be a vector of positive integers.

a. There is an  $(n+1)$-tuple $(A_0, \ldots, A_n)$ of homogeneous Laurent 
polynomials in $n$ variables, such that $L^\prime(\bfe)$ has a basis consisting 
of all $t^\bfa (A_0, \ldots, A_n)$ where $\bfa$ varies in $\hele^n$. 

b. The $A_i$'s have no common factors except for units
(which are products of nonzero constants and Laurent monomials
$t^\bfa$), and are uniquely determined up to common multiplication by
a unit.
\end{theorem}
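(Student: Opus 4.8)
The plan is to work entirely with the $(n+1)$-tuple of Betti polynomials $B = (B_0, \ldots, B_n)$ attached to a diagram, so that $L^\prime(\bfe)$ becomes a subspace of $(n+1)$-tuples of homogeneous Laurent polynomials of the prescribed total degrees $d_0 < \cdots < d_n$ (these degrees are determined up to a common shift, which accounts for the overall freedom to twist). First I would rewrite the HK-equations \eqref{SetLigHK} in polynomial form: setting $t_k = 1$ in $\sum_i (-1)^i B_i(t) = h(t)\cdot \prod_k (1 - t_k)$ says precisely that the alternating sum $\sum_i (-1)^i B_i$ is divisible by every $1 - t_k$, equivalently by $\prod_{k=1}^n (1 - t_k)$. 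So an element of $L^\prime(\bfe)$ is an $(n+1)$-tuple of homogeneous Laurent polynomials, with $B_i$ supported in total degree $d_i$, whose alternating sum is divisible by $\prod_k (1 - t_k)$; one must also allow general (possibly negative) coefficients, since $L^\prime(\bfe)$ is the $\QQ$-span, not the cone.

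The core of part (a) is to produce one nonzero tuple $(A_0, \ldots, A_n)$ in $L^\prime(\bfe)$ whose entries have no common factor, and then to show every element of $L^\prime(\bfe)$ is a homogeneous-Laurent-polynomial multiple of it. For existence, I would simply take $A_i = B_i$ to be the Betti polynomials of the equivariant resolution $E(\bfe)$ (Lemma \ref{linbettiLemMat}); these satisfy the HK-equations because they come from an actual resolution, hence lie in $L(\bfe) \subseteq L^\prime(\bfe)$. For the divisibility/spanning statement, given any $B \in L^\prime(\bfe)$, I would compare $B_0$ with $A_0$ in homological degree $0$: writing $d = \gcd$ considerations aside, the key algebraic input is that a homogeneous Laurent polynomial relation forces $B$ to be a scalar-times-monomial combination. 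Concretely, one shows $L^\prime(\bfe)$, viewed inside the Laurent polynomial ring $\QQ\{t_1,\ldots,t_n\}$ via (say) the first coordinate $B \mapsto B_0$, is a rank-one free module over $\QQ\{t_1,\ldots,t_n\}$: the map $B \mapsto B_0$ is injective because the remaining $B_i$ are determined by $B_0$ through the resolution's differentials up to the HK-constraints — more precisely, the structure of the equivariant complex (its maps are, after the substitution, those of a Koszul-type/Eagon–Northcott-type complex) forces the tuple to be rigid once $B_0$ is fixed. The homogeneity degrees being exactly the $d_i$ pins everything down. Then $B_0$ ranges over an ideal of $\QQ\{t_1,\ldots,t_n\}$ which, being an ideal in a Laurent polynomial ring over a field in the relevant grading, must be principal when we quotient by the unit group; its generator gives $(A_0,\ldots,A_n)$.

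For part (b): that the $A_i$ have no common nonunit factor is immediate once we know $(A_0,\ldots,A_n)$ generates $L^\prime(\bfe)$ as a module — if $g$ were a common factor, $(A_0/g, \ldots, A_n/g)$ would be in $L^\prime(\bfe)$ but not in $\QQ\{t_1,\ldots,t_n\}\cdot(A_0,\ldots,A_n)$, a contradiction. Uniqueness up to a unit is the standard statement that a generator of a free rank-one module is unique up to an invertible scalar, and the invertible elements of $\QQ\{t_1,\ldots,t_n\}$ are exactly the nonzero constants times Laurent monomials $t^\bfa$.

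\medskip
\noindent\textbf{Expected main obstacle.} The delicate point is establishing that the natural map $L^\prime(\bfe) \to \QQ\{t_1,\ldots,t_n\}$, $B \mapsto B_0$, is injective and has image a (nonzero) ideal — i.e.\ that fixing the degree-zero Betti polynomial rigidifies the whole tuple, and that no extra solutions to the HK-equations appear beyond scalar multiples of the equivariant one. Surjectivity onto an ideal is easy (closure under multiplication by $t^\bfa$ is built into the definition of $L^\prime$); the content is the injectivity/rigidity, which is really a statement about the combinatorics of the degree sequence $d_0 < \cdots < d_n$ together with the single divisibility condition by $\prod(1-t_k)$, and is where the bulk of Section 3's remaining arguments presumably go. I would isolate this as a lemma describing solutions of $\sum (-1)^i B_i \equiv 0 \pmod{\prod(1-t_k)}$ with prescribed homogeneity, reducing it by induction on $n$ (peeling off one variable, in the spirit of the $\overline{(\cdot)}$ and $\underline{(\cdot)}$ operations of Section 2).
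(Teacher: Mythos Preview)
Your overall strategy---identify $L^\prime(\bfe)$ as an $L$-submodule of $L^{n+1}$ (where $L=\QQ\{t_1,\ldots,t_n\}$) and show it is free of rank one---is exactly the paper's, and your argument for part (b) is correct. But two steps in your plan for (a) are not right as stated.

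First, your reason for injectivity of $B\mapsto B_0$ is spurious. Elements of $L^\prime(\bfe)$ are purely numerical objects: $(n+1)$-tuples of homogeneous Laurent polynomials satisfying the divisibility condition $\sum(-1)^iB_i\equiv 0\pmod{\prod_k(1-t_k)}$. There are no differentials, no complex, no Koszul or Eagon--Northcott structure to invoke; rigidity must come from the HK-equations and the prescribed degrees alone. The paper proves ``$B_0=0\Rightarrow B=0$'' directly from the HK-equations by induction on $n$, looking at the highest power of $t_1$ appearing in the tuple.

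Second, and more seriously, the sentence ``being an ideal in a Laurent polynomial ring over a field \ldots\ must be principal'' is false for $n\geq 2$: $\QQ\{t_1,\ldots,t_n\}$ is a UFD but not a PID, and homogeneity for total degree does not help (the degree-zero part is itself a Laurent polynomial ring in $n-1$ variables). So knowing that the image of $B\mapsto B_0$ is an ideal does not by itself give a single generator. This is precisely the content of the theorem, and the paper does not get it for free. Instead it introduces a well-ordered valuation $v$ on nonzero elements of $L^\prime(\bfe)$ (read off from the lexicographically leading monomials of $B_0$), checks three axioms, and runs a Euclidean-type reduction: given $A,B\in L^\prime(\bfe)$ one finds nonzero $p,q\in L$ with $pA-qB$ either zero or of strictly smaller valuation, by induction on $n$ after peeling off the top power of $t_1$. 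Well-ordering then forces the module to be cyclic. Your final paragraph gestures toward exactly this induction, and that is the right instinct; but it is the heart of the proof, not a lemma to be isolated after the fact, and it does \emph{not} reduce to a PID statement.
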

%c. If $A_0, \ldots, A_n$ are the Betti polynomials of the multigraded
%resolution of an artinian $\hele^n$-graded module, then
%the map $L(\bfe) \pil L^\prime(\bfe)$ is an isomorphism.

We prove this towards the end of this section. As a consequence
of the above theorem we can prove our main theorems.

\begin{proof}[Proof of Theorem \ref{LinbettiTheMainEn}.]
%Note that the $A_i$'s do not depend on the field $\kr$. 
Let $\bfe = r \cdot \bfe^\prime$. The associated $(n+1)$-tuple of Betti
polynomials to the complex $E(\bfe^\prime)^{(r)}$ is ( when $\kr$ has
characteristic $0$) 
\[ s = (s_{\alpha(\bfe^\prime,0)}^{(r)}, \ldots, s_{\alpha(\bfe^\prime,n)}^{(r)}).
\]
This will be a multiple of $(A_0, \ldots, A_n)$ in  Theorem \ref{LinHKTheLprim} 
above. By Theorem \ref{SchurTheFelles} the greatest common divisor of these
Schur polynomials is $1$. Hence we can take them to be equal to the $A_i$'s.
So if $B$ is in $L^\prime(\bfe)$, then $B = ps$ for some
Laurent polynomial $p$. And if $B$ is integral, it follows by the
same argument as in Corollary \ref{linbettiCorSimgr},
that $p$ must have integer coefficients,
proving the first part of Theorem \ref{LinbettiTheMainEn}.

To prove the second part of Theorem \ref{LinbettiTheMainEn}, 
note that if $A^\prime$ is another element 
in $L^\prime(\bfe)$ 
with the property of $A$, it must be $\gamma t^{\bfa} A$ 
for some rational number $\gamma$ and $\bfa$ in $\hele^n$. But when the
coefficients of the polynomials in $A^\prime$ are
integers then  $\gamma$ must be an integer, since
the highest weight of the Schur modules $S_{\alpha(\bfe^\prime,i)}$
always occurs with multiplicity one. And if $\gamma t^{\bfa} A$ 
is part of a lattice basis, only the values $\gamma = \pm 1$ can
occur.
\end{proof}

\begin{proof}[Proof of Theorem \ref{LinbettiTheMainTo}.]
If $E$ is a resolution with Betti polynomials $B$, then $E(\bfa)$ has
Betti polynomials $t^{\bfa} B$. Hence the image of $
L(\bfe) \pil L^\prime(\bfe)$, where the latter is 
${\QQ}\{t_1, \ldots, t_n \} \cdot s$,
identifies as an ideal in the Laurent polynomial ring. 

If $\kr$ has characteristic  $0$, the generator $s$ is in the image so the 
map is an
isomorphism. If $n=2$ it is shown in \cite{BF}, Proposition 3.1, that there
exists resolutions with Betti polynomials $s$, regardless of $\kr$. 
\end{proof}

\subsection{Properties  of tuples fulfilling the HK-equations}
\label{LinbettiSubsecHK}
\begin{proposition} Let $B = (B_0, \ldots, B_n)$ be 
an homogeneous $(n+1)$-tuple in $L^\prime(\bfe)$. 

a. If $B_0 = 0$ then $B = {\mathbf 0}$.

\noindent Assume $B_0$ is nonzero. Let $B_0 = t_1^{b_1} B_0^* + $ 
lower terms in $t_1$,
where $B_0^\prime$ is a Laurent polynomial in $t_2, \ldots, t_n$.

b. Then $B_1 = t_1^{b_1 + e_1} B_0^* +  $
lower terms in $t_1$. 

c. Each $B_i$ is nonzero and the highest power of $t_1$ occurring in 
$B_i$ for $i \geq 1$ is $t_1^{b_1 + e_1}$.
\end{proposition}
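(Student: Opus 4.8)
The plan is to turn the Herzog--Kühl equations into a single divisibility and then to read off the behaviour of the leading power of $t_1$. Setting $t_k=1$ in $\sum_i(-1)^iB_i$ for every $k$ is exactly the statement that $g:=\sum_{i=0}^n(-1)^iB_i$ is divisible by each $1-t_k$, hence — the $1-t_k$ being pairwise coprime in the Laurent polynomial ring — by $\prod_{k=1}^n(1-t_k)$. Write $g=h\cdot\prod_{k=1}^n(1-t_k)$; expanding $\prod_k(1-t_k)^{-1}=\sum_{\ell\ge 0}H_\ell$ with $H_\ell=H_\ell(t_1,\dots,t_n)$ the complete homogeneous symmetric polynomial of degree $\ell$, one finds that the total-degree-$D$ homogeneous component of $h$ is
\[
h^{(D)}=\sum_{i=0}^n(-1)^iB_i\,H_{D-d_i}\qquad(H_{<0}:=0),
\]
and that the HK-equations are equivalent to $h$ being a Laurent polynomial, i.e.\ to $h^{(D)}=0$ for $D\gg 0$. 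Comparing total degrees, $h$ is supported in degrees $d_0\le D\le d_n-n$, with $h^{(d_0)}=B_0$.

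For part b and the case $i=1$ of part c I would pass to the top power of $t_1$. Viewing everything in $\kk[t_2^{\pm},\dots,t_n^{\pm}][t_1^{\pm}]$ and writing $\overline f$ for the coefficient of the top power of $t_1$ in $f$, the point is that $H_\ell$ has top $t_1$-term $t_1^\ell$, so the top $t_1$-term of $B_iH_{D-d_i}$ is $\overline{B_i}\,t_1^{D+\varphi_i}$ with $\varphi_i:=\deg_{t_1}B_i-d_i$. Extracting from $h^{(D)}=0$ (for $D\gg0$) the coefficient of $t_1^{D+\varphi}$, $\varphi:=\max_i\varphi_i$, therefore gives
\[
\sum_{i\in J}(-1)^i\overline{B_i}=0,\qquad J:=\{\,i:\varphi_i=\varphi\,\}.
\]
The crux is to show $J=\{0,1\}$. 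That $0\in J$ should follow from $B_0$ having strictly the smallest total degree, the essential input being that the $\overline{B_i}$ are homogeneous of pairwise distinct degrees $d_i-\deg_{t_1}B_i$, hence linearly independent, so a top $t_1$-term cannot be cancelled except against another index in $J$; that $J$ is exactly $\{0,1\}$ should come from iterating this extraction one power of $t_1$ at a time and using $h^{(D)}=0$ for $D>d_n-n$ to terminate. Granting $J=\{0,1\}$ we get $\varphi_1=\varphi_0$, hence $\deg_{t_1}B_1=d_1+\varphi_0=\deg_{t_1}B_0+e_1=b_1+e_1$ and $\overline{B_1}=\overline{B_0}=B_0^{*}$, which is part b.

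For $i\ge2$ in part c one continues the extraction further down in $t_1$ (equivalently, applies the argument inductively after stripping the leading $t_1$-behaviour): this should give $\varphi_i=\varphi_0-(e_2+\cdots+e_i)$ and, since at the relevant $t_1$-level the surviving contributions of $B_0,\dots,B_{i-1}$ do not cancel among themselves and must be absorbed by $B_i$, that $B_i\ne0$; then $\deg_{t_1}B_i=d_i+\varphi_i=b_1+e_1$. Part a I would treat with the same machinery: if $B_0=0$ but $B\ne0$, let $i_0<i_1$ be the least and greatest indices with $B_i\ne0$ (there are at least two, since a single homogeneous summand cannot be divisible by the inhomogeneous $\prod_k(1-t_k)$); then the recursion above, started from $h^{(d_0)}=0$, vanishing on all degrees below $d_{i_0}$, and subject to $h^{(D)}=0$ for $D>d_n-n$, forces (after differencing in $D$ and using that the $B_i$ lie in pairwise distinct total degrees) relations among $B_{i_0},\dots,B_{i_1}$ that are incompatible with homogeneity unless every $B_i=0$, a contradiction. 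I expect the index bookkeeping — pinning down which $B_i$ realise the extreme powers of $t_1$ in parts b and c, and running the differencing cleanly in part a — to be the only real obstacle; the remaining steps are routine manipulations with complete homogeneous and elementary symmetric functions.
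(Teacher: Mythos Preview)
Your generating-function setup is fine: writing $g=\sum_i(-1)^iB_i=h\cdot\prod_k(1-t_k)$ and deducing $h^{(D)}=\sum_i(-1)^iB_iH_{D-d_i}=0$ for $D\gg 0$ is a correct reformulation of the HK equations. The problem is the step you flag as ``index bookkeeping'': it is not bookkeeping but the actual content of the proof, and your justification for $J=\{0,1\}$ is wrong as written.

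Concretely, you assert that the $\overline{B_i}$ are homogeneous of pairwise distinct degrees $d_i-\deg_{t_1}B_i$ and hence linearly independent. But $d_i-\deg_{t_1}B_i=-\varphi_i$, and by definition $\varphi_i=\varphi$ for every $i\in J$; so the $\overline{B_i}$ with $i\in J$ all have the \emph{same} total degree $-\varphi$. Your cancellation relation $\sum_{i\in J}(-1)^i\overline{B_i}=0$ is thus a genuine linear relation among homogeneous polynomials of a common degree, and nothing you have said rules out $|J|\ge 3$ or forces $0\in J$. The sentence ``that $0\in J$ should follow from $B_0$ having strictly the smallest total degree'' does not hold up: the smallness of $d_0$ gives no control over $\varphi_0=b_1-d_0$. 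Likewise, ``iterating the extraction one power of $t_1$ at a time'' produces, at level $t_1^{D+\varphi-1}$, a relation mixing $\overline{B_i}\cdot H_1(t_2,\dots,t_n)$ for $i\in J$, the sub-leading $t_1$-coefficients of $B_i$ for $i\in J$, and $\overline{B_i}$ for $i$ with $\varphi_i=\varphi-1$; there is no evident way to disentangle these without extra structure. Your sketch for part a has the same defect: the phrase ``relations \ldots\ incompatible with homogeneity'' is not made precise and, for the same reason as above, no homogeneity obstruction is visible.

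What is actually needed is exactly what your parenthetical hints at: induction on $n$ after stripping the leading $t_1$-behaviour. The paper does this directly from the HK equation for the projection $\pi_1$: if $A_1$ is the maximal $t_1$-exponent appearing in any $B_i$ and $B'_i$ is the coefficient of $t_1^{A_1}$ in $B_i$, one checks (using $\pi_1$) that $B'_0=0$ and that $(B'_1,\dots,B'_n)$ satisfies the HK equations in $t_2,\dots,t_n$; the inductive hypothesis for $n-1$ then gives $B'_1\neq 0$ (hence part a), all $B'_i\neq 0$ for $i\ge 1$ (hence part c), and a second application of the $\pi_1$-equation pins down $A_1=b_1+e_1$ and $\overline{B_1}=B_0^{*}$ (hence part b). Your generating-function identity $h^{(D)}=0$ encodes all of this, but to extract it you will still have to set up the induction on $n$; once you do, the detour through $h$ and complete symmetric functions no longer adds anything.
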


\begin{proof} Note that the statement holds when $n=1$. We shall then
work further using induction.
Let $u$ be the smallest index such that $B_u  \neq 0$. Suppose
$B_u = t_1^{p_1} B_u^* + $ lower terms in $t_1$. 
Let $t_2^{p_2}\cdots t_n^{p_n}$ be a monomial in $B_u^*$. 
Since the total degree of $B_u$ is fixed equal to 
$d_u$, then in $B_u$ this term occurs only with $t_1^{p_1}$ as the power of 
$t_1$. So let $B_u = c_{\bfp} t_1^{p_1} \cdots t_n^{p_n} + $ other terms, where
the coefficient $c_{\bfp}$ is nonzero.

The Herzog-K\"uhl equations give, by the projection omitting the first
coordinate, that some $B_{v}$, where $v > u$, 
will contain the monomial $t_1^{p_1 + d_v - d_u} 
t_2^{p_2} \cdots t_n^{p_n}$ (denoting the degree of $B_i$ by $d_i$). 

Let $A_1$ be the highest power of $t_1$ occurring in any $B_i$. 
Write $B = t_1^{A_1}B^\prime +$ lower terms in $t_1$, where
$B^\prime$ is a nonzero homogeneous $(n+1)$-tuple in the variables 
$t_2, \ldots, t_n$. 
By what we have shown $A_1 \geq p_1 + d_v - d_u$. This gives
$A_1 > p_1$ and 
the smallest $u^\prime$ for which $B^\prime_{u^\prime}$ is nonzero must be 
$> u$.  By omitting $B_0^\prime$
(which is zero)
we may consider $B^\prime$ as an $n$-tuple
in $t_2, \ldots, t_n$. Also we see that it will satisfy the 
Herzog-K\"uhl equations for $n$-tuples, by looking at the equations
satisfied by $B$ when we always keep the first coordinate equal
to ${A_1}$. By induction on $n$ we get that $B_1^\prime$ is nonzero and 
so the index $u$ must be $0$. This proves a. and shows that $b_1 = p_1$.
Also, by induction from c. we get that each $B_i^\prime$ is nonzero,
for $i \geq 1$. Hence we get $B_i$ nonzero for $i \geq 1$, and we also have shown $B_0$  
nonzero, proving the first part of c.

Let $t_2^{q_2} \cdots t_n^{q_n}$ be a term occurring in $B_1^\prime$.
The Herzog-K\"uhl equations with projection omitting the first coordinate,
gives that $t_1^{A_1 - e_1}t_2^{q_2} \cdots t_n^{q_n}$ occurs as a term in 
$B_0$. Hence $A_1 - e_1 \leq b_1$. Since we also have $A_1 \geq b_1 + e_1$
we get $A_1 = b_1 + e_1$. Since the $B_i^\prime$ are nonzero this also
proves the second part of c.

Now if $c_{\bfp} t_1^{p_1} \cdots t_n^{p_n}$ is a term in $B_0$ with $p_1 = b_1$,
it follows by the HK-equations, by the projection omitting the first
coordinate, that $t_1^{p_1+\sum_{i = 1}^v e_i}t_2^{p_2} \cdots t_n^{p_n}$ 
occurs as a term in $B_v$ for some $v$. Since $A_1 = b_1 + e_1$ this
only happens for $v = 1$ and with coefficient $c_{\bfp}$, thus proving b. 
\end{proof}

\begin{lemma} Let $p$ be a homogeneous Laurent polynomial and $B$ a homogeneous
$(n+1)$-tuple. Then $p \cdot B$ 
fulfils the HK-equations if and only if $B$ fulfils these equations.
\end{lemma}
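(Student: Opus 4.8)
The plan is to show that multiplication by a homogeneous Laurent polynomial $p$ commutes with forming the linear combinations that appear in the Herzog--K\"uhl equations \eqref{SetLigHK}, so that the equations for $p\cdot B$ are obtained from those for $B$ simply by multiplying through by (the various shifts of) $p$. Since $p$ is a unit in the Laurent polynomial ring on no variable --- more precisely, since multiplication by a nonzero Laurent polynomial is injective --- the HK-equations for $p\cdot B$ hold if and only if they hold for $B$.

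Concretely, write $p = \sum_{\bfc} p_{\bfc} t^{\bfc}$ and $B_i = \sum_{\bfa} \beta_{i,\bfa} t^{\bfa}$, so that the $i$-th component of $p \cdot B$ is $\sum_{\bfb}\big(\sum_{\bfc} p_{\bfc}\,\beta_{i,\bfb - \bfc}\big) t^{\bfb}$, i.e. its coefficient in multidegree $\bfb$ is $\gamma_{i,\bfb} = \sum_{\bfc} p_{\bfc}\, \beta_{i, \bfb - \bfc}$. Now fix $k \in \{1,\dots,n\}$ and $\hat{\bfb} \in \hele^{n-1}$, and consider the left-hand side of \eqref{SetLigHK} for $p \cdot B$, namely $\sum_{i,\ \pi_k(\bfb) = \hat{\bfb}} (-1)^i \gamma_{i,\bfb}$. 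Substituting the expression for $\gamma_{i,\bfb}$ and interchanging the (finite) sums over $\bfc$ and over $i, \bfb$, one groups the terms according to the value of $\hat{\bfc} = \pi_k(\bfc)$: the inner sum becomes, for each $\bfc$, the quantity $\sum_{i,\ \pi_k(\bfa) = \hat{\bfb} - \hat{\bfc}} (-1)^i \beta_{i,\bfa}$ with $\bfa = \bfb - \bfc$, which is exactly the left-hand side of the HK-equation \eqref{SetLigHK} for $B$ in multidegree $\hat{\bfb} - \hat{\bfc}$ (direction $k$). Here one uses that the condition $\pi_k(\bfb) = \hat{\bfb}$ together with $\bfa = \bfb - \bfc$ is equivalent to $\pi_k(\bfa) = \hat{\bfb} - \pi_k(\bfc)$, with the $k$-th coordinate of $\bfa$ free.

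Thus the HK-expression for $p\cdot B$ in direction $k$ and multidegree $\hat{\bfb}$ equals $\sum_{\hat{\bfc}} \big(\sum_{c_k} p_{\bfc} t_k^{c_k}\big)\cdot (\text{HK-expression for } B \text{ in direction } k, \text{ multidegree } \hat{\bfb} - \hat{\bfc})$ --- that is, it is the result of applying the ``multiply by $p$ then specialize $t_k = 1$'' operation to the HK-identity for $B$. (Equivalently, and perhaps cleanest: setting $t_k=1$ in the polynomial identity $\sum_i (-1)^i (p B_i) = p \cdot \sum_i (-1)^i B_i$ shows that the $t_k=1$ specialization of $\sum_i(-1)^i pB_i$ is $p|_{t_k=1}$ times that of $\sum_i(-1)^iB_i$.) In particular, if $B$ satisfies all HK-equations then each such specialization vanishes, hence so does the one for $pB$, so $pB \in L'(\bfe)$ --- bearing in mind that $p B$ is still homogeneous with total-degree difference vector $\bfe$ since $p$ is homogeneous and multiplication by the single homogeneous polynomial $p$ shifts all total degrees $d_i$ by the same amount $\deg p$. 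Conversely, if $pB$ satisfies the HK-equations, then for each $k$ we get $p|_{t_k=1}\cdot\big(\sum_i (-1)^i B_i\big)|_{t_k=1} = 0$; since $p \neq 0$, the Laurent polynomial $p|_{t_k=1}$ in the remaining variables is nonzero, and the Laurent polynomial ring is a domain, so $\big(\sum_i (-1)^i B_i\big)|_{t_k=1} = 0$, which is precisely the family of HK-equations for $B$ in direction $k$.

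**The main obstacle** is not conceptual but bookkeeping: one must be careful that all the index sets appearing are finite so that the interchange of summations is legitimate (this holds because $p$ and the $B_i$ are Laurent \emph{polynomials}, hence finitely supported), and one must correctly track the role of the ``omitted'' coordinate $k$ in the re-indexing $\bfa = \bfb - \bfc$, $\hat{\bfb} \mapsto \hat{\bfb} - \pi_k(\bfc)$. Phrasing the whole argument via the substitution $t_k = 1$ in the polynomial identity $\sum_i (-1)^i(pB_i) = p\cdot\sum_i(-1)^iB_i$ sidesteps the explicit coefficient manipulation entirely and makes both directions of the equivalence transparent, the reverse direction using only that a nonzero Laurent polynomial remains nonzero after specializing one variable to $1$ and that the Laurent ring is an integral domain.
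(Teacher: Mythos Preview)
Your proof is correct and takes a genuinely different route from the paper's. The paper argues the reverse implication by a leading-term contradiction: assuming some HK-equation for $B$ in direction $k$ fails, it chooses the lexicographically largest $(n-1)$-tuple $\hat{\bfa}$ at which this happens, writes $p = t^{\boldsymbol{\lambda}} + (\text{lex-lower terms})$, and observes that the HK-equation for $p\cdot B$ at $\pi_k(\boldsymbol{\lambda}) + \hat{\bfa}$ then also fails, since only the leading monomial of $p$ can contribute there. Your approach instead packages all HK-equations in direction $k$ into the single polynomial identity $\bigl(\sum_i (-1)^i B_i\bigr)\big|_{t_k=1}=0$, notes that specialisation commutes with multiplication by $p$, and cancels the factor $p|_{t_k=1}$ using that the Laurent ring in the remaining variables is a domain. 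This is cleaner and avoids singling out an extremal violating index; the paper's argument, on the other hand, stays entirely at the level of coefficients and does not require reinterpreting the HK-equations as a substitution.

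One step you assert without justification deserves a sentence: that $p\neq 0$ implies $p|_{t_k=1}\neq 0$. This fails for general Laurent polynomials (e.g.\ $t_k-1$), but it holds here precisely because $p$ is \emph{homogeneous}: distinct monomials $t^{\bfa}$ of a fixed total degree have distinct projections $\pi_k(\bfa)$, so no cancellation can occur when setting $t_k=1$. You should make this explicit, since it is exactly where the homogeneity hypothesis on $p$ enters the converse direction of your argument.
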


\begin{proof}
The if direction is clear. So suppose $p \cdot B$ fulfils the HK-equations
but $B$ does not. So for some $n-1$-tuple $\hat\bfa$ the equation
(\ref{SetLigHK}) is not fulfilled. By re-indexing we may assume that
$\hat \bfa$ are the first coordinates in  $\bfa$, i.e. $k = n$
in (\ref{SetLigHK}).

Also suppose $\hat\bfa$ is the lexicographic largest $(n-1)$-tuple such that 
the HK-equations for $B$ do not hold. 
Write $p = t_1^{\lam_1} t_2^{\lam_2} \cdots t_n^{\lam_n} + $ lower terms for the
lex order. Then we see that for the $(n-1)$-tuple 
\[ (\lam_1, \lam_2, \ldots, \lam_{n-1}) + 
(\hat a_1, \hat a_2, \ldots, \hat a_{n-1}), \]
the HK-equations for $p \cdot B$ where we keep the first $n-1$ coordinates
fixed equal to the $(n-1)$-tuple above, does not hold.
\end{proof}

\begin{corollary} If $p$ is any Laurent polynomial, 
and $B$ is any $(n+1)$-tuple of Laurent polynomials, 
then $B$ is in $L^\prime(\bfe)$ if and only if 
$p \cdot B$ is in $L^\prime(\bfe)$. 
In particular $L^\prime(\bfe)$ is a submodule of $L^{n+1}$. 
\end{corollary}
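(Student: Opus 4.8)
The statement to prove is the Corollary: for any Laurent polynomial $p$ and any $(n+1)$-tuple $B$ of Laurent polynomials, $B \in L^\prime(\bfe)$ if and only if $p \cdot B \in L^\prime(\bfe)$; and consequently $L^\prime(\bfe)$ is a submodule of $L^{n+1}$ (where $L = {\QQ}\{t_1, \ldots, t_n\}$ is the Laurent polynomial ring). The membership condition $B \in L^\prime(\bfe)$ consists of three parts: (i) $B$ satisfies the HK-equations (\ref{SetLigHK}); (ii) $B$ is homogeneous with each $B_i$ supported in a single total degree $d_i$; (iii) the difference vector of the $d_i$ is $\bfe$. The plan is to check each of these three conditions is preserved under multiplication by $p$ and (for the ``only if'' direction, which is the content) reflected by it. I would reduce immediately to the case where $p$ is a single Laurent \emph{monomial} $t^\bfc$ by writing $p = \sum_\bfc p_\bfc t^\bfc$ and arguing degree-by-degree, but in fact the preceding Lemma already does the HK-part for general $p$, so I will invoke it directly.

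\textbf{Key steps.} First, for the HK-equations: the immediately preceding Lemma states that $p \cdot B$ fulfils the HK-equations if and only if $B$ does, so condition (i) is handled by citation. Second, for homogeneity and the total-degree structure: if $B_i = \sum_{|\bfa| = d_i} \beta_{i,\bfa} t^\bfa$ is supported in total degree $d_i$, and $p$ is homogeneous of total degree $m$, then $(p \cdot B)_i = p \cdot B_i$ is supported in total degree $d_i + m$; so $p \cdot B$ is again homogeneous, with the same difference vector $\Delta(d_0 + m, \ldots, d_n + m) = \Delta(d_0, \ldots, d_n) = \bfe$. This gives the ``if'' direction completely, and shows that conditions (ii) and (iii) pass to $p \cdot B$. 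Third, for the ``only if'' direction: suppose $p \cdot B \in L^\prime(\bfe)$ with $p \neq 0$. Since $L$ is an integral domain (the Laurent polynomial ring over $\QQ$) and $p \cdot B \neq 0$ forces $B \neq 0$ (if $B = \mathbf{0}$ then trivially $B \in L^\prime(\bfe)$, so we may assume $B \neq 0$, in which case $p \cdot B \neq 0$). Homogeneity of $p \cdot B$ together with $p \neq 0$: write $p = t^{\bfc} p^* + (\text{lower lex terms})$ and $B_i = t^{\bfd_i} B_i^* + (\text{lower})$; the top-degree behaviour forces each nonzero $B_i$ to be supported in a single total degree (otherwise $p \cdot B_i$ would span at least two total degrees — here I use that $p$, being a \emph{single} Laurent polynomial times a tuple, cannot cancel the spread in total degrees of $B_i$ since multiplication by $p$ shifts the total-degree support of $B_i$ by a fixed \emph{interval} whose length equals that of $B_i$'s support). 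Then the difference vector of $p \cdot B$ being $\bfe$ forces the difference vector of $B$ to be $\bfe$ as well, after cancelling the common shift. Combined with the Lemma (which gives condition (i) for $B$), this yields $B \in L^\prime(\bfe)$. Finally, the ``in particular'' assertion: $L^\prime(\bfe)$ is closed under addition since the HK-equations are linear, the total-degree-$d_i$ support condition is linear within a fixed degree vector, and (using what we just proved) any $B, B^\prime \in L^\prime(\bfe)$ can be brought to the same degree vector by multiplying by suitable monomials $t^{\bfa}, t^{\bfa^\prime}$ — but more directly, we observe that all elements of $L^\prime(\bfe)$ share the \emph{same} difference vector $\bfe$, and by Theorem \ref{LinHKTheLprim} they are exactly the $t^\bfa$-multiples of a fixed tuple $(A_0, \ldots, A_n)$; closure under $L$-multiplication is then immediate from $t^\bfa \cdot t^{\bfb}(A_i) = t^{\bfa+\bfb}(A_i)$ and linearity. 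Actually the cleanest route for the ``in particular'' clause is: we have just shown $B \in L^\prime(\bfe) \iff p B \in L^\prime(\bfe)$ for monomials $p = t^\bfc$; since $L^\prime(\bfe)$ is a $\QQ$-vector space and is stable under multiplication by every Laurent monomial, it is stable under multiplication by every Laurent polynomial, i.e. it is an $L$-submodule of $L^{n+1}$.

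\textbf{Main obstacle.} The only genuinely non-formal point is the ``only if'' direction of the total-degree condition: showing that if $p \cdot B$ has each component supported in a single total degree, then so does $B$. One must rule out the possibility that a spread in the total-degree support of $B_i$ is masked by multiplication by $p$. The key observation is that if $B_i$ is supported in total degrees forming a set of diameter $\delta > 0$, and $p$ is supported in total degrees of diameter $\delta^\prime \geq 0$, then $p B_i$ (being nonzero, as $L$ is a domain) is supported in total degrees of diameter exactly $\delta + \delta^\prime \geq \delta > 0$ — there is no cancellation at the two extreme total degrees because the product of the top-total-degree part of $p$ with the top-total-degree part of $B_i$ is a nonzero polynomial (again using that $L$ graded by total degree is a domain), and similarly at the bottom. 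This forces $\delta = 0$. I would state this as a short separate observation (or fold it into the proof), and otherwise the Corollary follows by assembling the three conditions as above. Everything else is routine bookkeeping with the linearity of the HK-equations and the shift behaviour of total degrees under monomial multiplication.
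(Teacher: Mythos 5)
The core of your argument is in the right spirit — cite the Lemma for the HK-equations, and observe that multiplication by a homogeneous $p$ shifts every total degree by the same amount, so the difference vector $\bfe$ is preserved — but the ``only if'' direction has a gap in the reduction to the homogeneous case. You write ``Homogeneity of $p \cdot B$ \dots forces each nonzero $B_i$ to be supported in a single total degree,'' but $p\cdot B\in L^\prime(\bfe)$ does \emph{not} imply that $p\cdot B$ is pure: by definition $L^\prime(\bfe)=\oplus_{\Delta\bfd=\bfe}L^\prime(\bfd)$ is the linear span of pure tuples, so a typical element is a sum of pure tuples with different degree vectors $\bfd$ and is therefore not itself supported in a single total degree per component. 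Your diameter argument correctly shows that \emph{if} each $p B_i$ is concentrated in one total degree then so is each $B_i$ (and $p$ must be homogeneous), but you may not assume that hypothesis. Relatedly, the Lemma you cite is stated and proved only for homogeneous $p$ and homogeneous $B$ (its proof uses the lexicographically largest monomial of $p$ in a way that genuinely relies on this); it does not ``do the HK-part for general $p$'' as you assert in your plan.

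The missing step is exactly what the paper's one-sentence proof (``this is because $L^\prime(\bfe)$ is a graded vector space'') is invoking. Grade $L^{n+1}$ by $d_0\in\ZZ$, where the $d_0$-piece consists of tuples $(B_0,\dots,B_n)$ with each $B_i$ homogeneous of total degree $d_0+\sum_{j\leq i}e_j$; then $L^\prime(\bfe)$ is a graded subspace. Given $pB\in L^\prime(\bfe)$ with $B\neq 0$, decompose $p$ and $B$ into their graded pieces and look at the top indices $m^*$ (for $p$) and $d_0^*$ (for $B$). The top graded piece of $pB$ is precisely $p^{(m^*)}$ times the top piece of $B$; it is nonzero because $L$ is a domain, and it lies in $L^\prime(\bfe)$ by gradedness. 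Now \emph{both} factors are homogeneous, so the Lemma applies and the top piece of $B$ lies in $L^\prime(\bfe)$; subtract $p$ times that piece (which is in $L^\prime(\bfe)$ by the easy ``if'' direction) and descend. Your handling of the ``if'' direction and of the ``in particular'' clause (closure under monomials plus $\QQ$-linearity gives closure under $L$) is fine, but without the graded-reduction step the ``only if'' direction, which is what requirement~2 of Subsection~3.2 actually needs, is not established.
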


\begin{proof}
This is because $L^\prime(\bfe)$ is a graded vector space.
\end{proof}

Given a Laurent polynomial $B_0$,
let $t_1^{c_1} \cdots t_n^{c_n}$ be
the lexicographic largest term in $B_0$. 
For each $i = 1, \ldots, n$ let $b_i$ be the smallest integer such that 
\[ t_1^{c_1} \cdots t_{i-1}^{c_{i-1}}t_i^{b_i} t_{i+1}^{d_{i+1}} \cdots
t_n^{d_n} \]
is in $B_0$ for some choice of $d_{i+1}, \ldots, d_n$. 
We define the {\it valuation} of $B_0$ to be 
\[ (c_1 - b_1, c_2 - b_2, \ldots, c_n - b_n). \]
\begin{example} \label{linHKEksVal}
The valuation of the Schur polynomial $s_{4,2,1}$ of 
Example \ref{schurEksS} is $(3,1,0)$.
\end{example}

The valuation is in $\nat^n$ (note however that $c_n - b_n$ is always zero). We 
now order $\nat^n$ lexicographically with $0 < 1 < 2 < \cdots$. We
may note  that $\nat^n$ with this ordering is a well-ordered set,
i.e. each subset has a smallest element.
When $B$ is an $(n+1)$-tuple of Laurent polynomials with $B_0$ nonzero,
we define the valuation of $B$ to be the valuation of $B_0$.

\subsection{The abstract situation}
To give a more transparent argument we will now abstract 
our situation.
Let $L$ be an integral domain, and $M$ a submodule of $L^{n+1}$.
Suppose we have a map $v : M \backslash \{0\} \pil T$, which we call
a valuation, to a well ordered set $T$, subject to the following
requirements.

\begin{itemize}
\item[1.] If $p$ is in $L$ and $b$ in $M$, then 
  $v(pb) \geq v(b)$.
\item[2.] If $b$ is in $L^{n+1}$ and $p$ in $L$, then $pb$ is in 
$M$ if and only if $b$ is in $M$.
\item[3.] Let $a$ and $b$ be in $M$. Then there exists nonzero
$p$ and $q$ in $L$
such that $pa - qb$ is either zero or has valuation $< \max \{ v(a), v(b) \}$.
\end{itemize}

Note that if $L$ is the Laurent polynomial ring, $M$ is $L^\prime(\bfe)$, 
and $T = \nat^n$,
by letting $v(B)$ be the valuation as defined in the end of the
preceding subsection, it fulfils 1. and 2. Note that $v$ is welldefined
since $B_0$ is nonzero if $B$ is nonzero.

   We shall later show that it fulfils 3. But let us assume that we have
a valuation as above. We then get a stronger version of 3.

\begin{lemma} \label{LinLemMinMax}
Given a valuation $v$ as above, and let $a$ and $b$ be nonzero in $M$.
Then there are nonzero $p$ and $q$ in $L$ such that $pa - qb$ is either zero
or has valuation $ < \min \{ v(a), v(b) \}$.
\end{lemma}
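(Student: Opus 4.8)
The plan is to bootstrap from requirement~3 by an iteration argument that drives the valuation strictly below $\min\{v(a),v(b)\}$ rather than merely below $\max\{v(a),v(b)\}$. So first I would invoke requirement~3 to get nonzero $p,q \in L$ with $c := pa - qb$ either zero or of valuation $v(c) < \max\{v(a),v(b)\}$; if $c=0$ we are already done (take the same $p,q$, or adjust trivially), so assume $c\neq 0$. Without loss of generality say $v(a) \leq v(b)$, so $\max\{v(a),v(b)\} = v(b)$ and we know $v(c) < v(b)$. The point is that $c$ is again a nonzero element of $M$ (it is a combination of elements of $M$), so we may recurse: we now want to cancel $a$ against $c$.

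The key step is a descent on the well-ordered set $T$. If already $v(c) < v(a) \leq v(b)$, we are done since $v(c) < \min\{v(a),v(b)\}$. Otherwise $v(a) \leq v(c) < v(b)$, and we apply requirement~3 to the pair $(a,c)$: there are nonzero $p',q' \in L$ with $p'a - q'c$ either zero or of valuation $< \max\{v(a),v(c)\} = v(c)$. Substituting $c = pa - qb$ gives $p'a - q'(pa - qb) = (p' - q'p)a + (q'q)b$, which is — up to sign and after checking the coefficients are nonzero — again of the form (unit times $a$) minus (unit times $b$), now with strictly smaller valuation. Iterating, each round either produces the zero element or strictly decreases the valuation of the "remainder"; since $T$ is well ordered this process terminates, and it can only terminate once the remainder has valuation $< v(a) = \min\{v(a),v(b)\}$ (or is zero). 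Collecting the accumulated coefficients of $a$ and of $b$ yields the desired $p$ and $q$; one must track that these stay nonzero, which follows because at the terminating step the remainder is nonzero with valuation below that of both $a$ (via its $b$-free part being controlled) and $b$, forcing both coefficients to be nonzero — or handle the zero case separately.

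The main obstacle I anticipate is the bookkeeping of \emph{nonzeroness} of the coefficients through the iteration. Requirement~3 only guarantees $p,q$ nonzero for a single step; when one substitutes and forms $(p'-q'p)a + (q'q)b$, the coefficient $p' - q'p$ of $a$ could in principle vanish. I would argue that if $p' - q'p = 0$ then $p'a - q'c = (q'q)b$ is a nonzero multiple of $b$, hence by requirement~1 has valuation $\geq v(b) > v(c) = \max\{v(a),v(c)\}$, contradicting the conclusion of requirement~3 applied to $(a,c)$; so the coefficient of $a$ is automatically nonzero, and similarly the coefficient of $b$ is nonzero since $c$ genuinely involves $b$. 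The only remaining subtlety is the base case where some step outputs exactly $0$: then that step's $p,q$ already witness the lemma with the stronger conclusion vacuously (the "or has valuation $<$" clause), and we simply stop. With these observations the argument is a clean transfinite (in fact finite, by well-ordering) induction, and no computation in $L$ beyond tracking linear combinations is needed.
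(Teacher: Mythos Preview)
Your proposal is correct and follows essentially the same route as the paper: fix the element of smaller valuation (say $a$), repeatedly apply requirement~3 to $a$ and the current remainder to drive the valuation strictly down, invoke well-ordering of $T$ to terminate, and use requirement~1 to certify that the final coefficients of $a$ and $b$ are nonzero. The paper's write-up is slightly more streamlined in the bookkeeping---it records the $b$-coefficient explicitly as the product $q_nq_{n-1}\cdots q_1$ (hence nonzero) and then deduces nonzeroness of the $a$-coefficient from requirement~1 only once at the end---whereas you argue nonzeroness step by step; but the argument is the same.
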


\begin{proof}
Suppose $v(a) \leq v(b)$. We can then find nonzero 
$p_1$ and $q_1$ such that $b^\prime = q_1b - p_1 a$ is zero or has
valuation $< v(b)$. If $b^\prime$ is nonzero with valuation $\geq a$,
we may continue and find nonzero $p_2$ and $q_2$ such that 
\[ b^{\prime\prime} = q_2b^\prime - p_2 a = q_2q_1b - (p_1 q_2 + p_2) a  
\] is either zero or has valuation $< v(b^\prime)$. 
In this way we may continue. If the process does not stop we have an 
infinite strictly decreasing chain of valuations, 
contrary to $T$ being well-ordered.
Hence for some $n$ we obtain 
\begin{eqnarray*} 
b^{(n)} & = & q_n \cdot b^{(n-1)} - p_n \cdot a \\
  & = & q_n q_{n-1} \cdots q_1 \cdot b - p^\prime_n a
\end{eqnarray*}
(for some $p^\prime_n$)
which has valuation $< v(a)$, or is zero.
Note that $q_n q_{n-1} \cdots q_1$ is nonzero. By 1. we must also have
$p_n^\prime$ nonzero. 
\end{proof}

We are now ready to prove our structure result for valuations fulfilling
requirements 1., 2. and 3. 

\begin{proposition} \label{LinbettiProStruktur}
Suppose $L$ is a unique factorisation domain, and $M$ is a submodule of $L^{n+1}$
with a valuation fulfilling 1., 2., and 3.
Then there is an $a$ in $M$ such that $M$ is the submodule
of $L^{n+1}$ generated by $a$. Any $a$ in $M$ such that the greatest
common divisor of its components $a_1, \ldots, a_{n+1}$ is $1$, is such
a generator.
\end{proposition}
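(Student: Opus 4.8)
The plan is to first produce a single element of minimal valuation and show it generates, then upgrade to the gcd statement. Concretely, since $T$ is well-ordered, I would choose $a \in M$ whose valuation $v(a)$ is minimal among all nonzero elements of $M$. I claim $M = La$. Indeed, if $b \in M$ is nonzero, apply Lemma \ref{LinLemMinMax} to $a$ and $b$: there are nonzero $p, q \in L$ with $pb - qa$ either zero or of valuation strictly less than $\min\{v(a), v(b)\} \le v(a)$. By minimality of $v(a)$ the latter is impossible, so $pb = qa$, i.e. $pb \in La$. Now I would like to divide by $p$. Here is where I use that $L$ is a UFD: from $pb = qa$ and the fact that this forces $b$ to be an $L$-multiple of $a$ "up to $p$", I would argue componentwise. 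Writing $a = (a_1, \ldots, a_{n+1})$ and $b = (b_1,\ldots,b_{n+1})$, we have $p b_i = q a_i$ for every $i$, so $q/p$ (in the fraction field) is a single ratio $c$ independent of $i$ whenever some $a_i \neq 0$; thus $b = c\, a$ with $c \in \mathrm{Frac}(L)$. Writing $c = u/w$ in lowest terms in the UFD $L$, we get $w b = u a$ in $L^{n+1}$; since $\gcd(u,w)=1$, $w$ divides every component $u a_i$ forces $w \mid a_i$ for all $i$ — but then $a/w \in L^{n+1}$ and, by requirement 2 applied to $w \cdot (a/w) = a \in M$, we get $a/w \in M$, contradicting minimality of $v(a)$ unless $w$ is a unit (using requirement 1: $v(a) = v(w \cdot (a/w)) \ge v(a/w)$, and minimality forces equality, but we can iterate to strip any non-unit prime from $w$, or simply observe $a/w$ is a strictly "smaller" element in the divisibility sense and rerun the argument). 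Hence $c \in L$ and $b = c a \in La$.

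The cleanest way to handle the $w$-stripping: I would instead choose $a$ of minimal valuation \emph{and} among those, with $\gcd(a_1,\ldots,a_{n+1})$ a unit — possible because if $d = \gcd$ is a non-unit, then $a/d \in L^{n+1}$ and $a = d \cdot (a/d) \in M$ gives $a/d \in M$ by requirement 2, with $v(a/d) \le v(a)$ by requirement 1, so $v(a/d) = v(a)$ by minimality, and $a/d$ has unit gcd. With this normalization, the relation $wb = ua$ with $\gcd(u,w)=1$ and $\gcd$ of components of $a$ a unit forces $w$ to be a unit directly (as $w$ divides $\gcd(ua_1,\ldots,ua_{n+1}) = u \cdot(\text{unit})$ and $\gcd(u,w)=1$). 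So $b = (u/w) a \in La$, proving $M = La$.

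Finally, for the last sentence of the proposition: let $a' \in M$ be any element with $\gcd(a'_1,\ldots,a'_{n+1}) = 1$. Since $M = La$, write $a' = p a$ for some $p \in L$. Then $\gcd(a'_i) = \gcd(p a_i) = p \cdot \gcd(a_i)$ up to units; since $\gcd(a_i)$ is a unit (by the normalization just above, or because $a$ itself generates and hence divides $a'$, which divides... — more carefully: $a = p' a'$ too if both generate, but we only know $a' = pa$, so from $1 = \gcd(a'_i) = p\cdot\gcd(a_i)$ up to a unit we read off that $p$ is a unit). Hence $a' = p a$ with $p$ a unit, so $La' = La = M$, i.e. $a'$ is also a generator.

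The main obstacle I anticipate is not the existence of the minimal element (immediate from well-ordering) but the bookkeeping that converts the "weak" cancellation $pb = qa$ coming from Lemma \ref{LinLemMinMax} into an honest containment $b \in La$; this is exactly where the UFD hypothesis and requirements 1–2 must be combined, and the normalization of $a$ to have unit-gcd components is the device that makes it go through without an induction on the number of prime factors.
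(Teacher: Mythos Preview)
Your proof is correct and follows essentially the same approach as the paper: choose $a \in M$ of minimal valuation, normalize it (via requirements 1 and 2) so that its components have unit gcd, then use Lemma \ref{LinLemMinMax} and minimality to force $pb = qa$, and finally invoke the UFD hypothesis together with the gcd normalization to conclude that $q$ (your $w$) is a unit, whence $b \in La$. The paper's write-up is terser---it dispatches the final sentence with ``Given the first statement, the second is clear'' and handles the division step in one line---but the logical structure is the same.
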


\begin{proof}
Given the first statement, the second is clear. 
Let $a$ be a nonzero element of $M$ with the smallest possible valuation.
Then we may write $a = p a^\prime$ for some polynomial $p$ where
the components of $a^\prime$ has $1$ as their greatest common divisor.
By axiom 1. 
$v(a^\prime) \leq v(a)$ and so we may assume that $a$ is $a^\prime$.
Now choose any nonzero $b$ in $M$. Then there are nonzero $p$ and $q$
such that $qb - pa$ is either zero or has valuation $< v(a)$. The 
latter is not so by assumption, so $qb = pa$. We factor out any common factors of 
$p$ and $q$. But then by unique factorisation and construction of $a$
we must have $q$ a unit. 
\end{proof}

We will now show that the property 3. holds in our case when 
$L$ is the Laurent polynomial ring in $n$ variables and $M$ is
the submodule $L^{\prime}(\bfe)$.

\begin{proposition} \label{LinbettiProLauval}
Let $A$ and $B$ be $(n+1)$-tuples in $L^\prime(\bfe)$ and $v$ the valuation
defined at the end of Subsection \ref{LinbettiSubsecHK}. Then
there are nonzero Laurent polynomials $p$ and $q$ such that 
$pA - qB$ is zero or has valuation $< \max \{ v(A), v(B) \}$.
\end{proposition}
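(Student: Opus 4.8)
The plan is to reduce the whole statement to one Euclidean division in the single variable $t_1$. The point is that the valuation takes values in $\nat^n$ with the \emph{lexicographic} order, so it is enough to produce nonzero $p,q$ for which either $pA-qB=0$ or the \emph{first} coordinate of $v(pA-qB)$ is strictly smaller than $\max\{v(A)_1,v(B)_1\}=\max\{v(A),v(B)\}_1$. By the very definition of the valuation in Subsection \ref{LinbettiSubsecHK}, for any tuple $C$ with $C_0\neq 0$ the first coordinate $v(C)_1$ is exactly the difference between the highest and the lowest power of $t_1$ occurring in $C_0$ --- its ``$t_1$-spread''. So everything is about shrinking the $t_1$-spread of the zeroth component, and that is a one-variable matter.

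First I would assume $A,B$ are nonzero (otherwise there is nothing to prove) and, interchanging $A$ and $B$ if needed, that $v(A)_1\geq v(B)_1$, i.e. $A_0$ has $t_1$-spread at least that of $B_0$. Then I would pass to the polynomial ring $K[t_1]$ over the field $K=\QQ(t_2,\ldots,t_n)$: write $A_0=t_1^{\,a}\,\tilde{A}_0$ and $B_0=t_1^{\,b}\,\tilde{B}_0$ with $\tilde{A}_0,\tilde{B}_0\in K[t_1]$ having nonzero constant term, so that $\deg_{t_1}\tilde{A}_0=v(A)_1\geq v(B)_1=\deg_{t_1}\tilde{B}_0$. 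Ordinary division of polynomials over the field $K$ gives $\tilde{A}_0=\tilde{Q}\,\tilde{B}_0+\tilde{R}$ with $\tilde{R}=0$ or $\deg_{t_1}\tilde{R}<\deg_{t_1}\tilde{B}_0$; moreover $\tilde{Q}\neq 0$, since $\tilde{Q}=0$ would force $\tilde{A}_0=\tilde{R}$, hence $v(A)_1=\deg_{t_1}\tilde{R}<v(B)_1\leq v(A)_1$, absurd. Clearing the common denominator of $\tilde{Q}$ and $\tilde{R}$ by a nonzero $h\in\QQ[t_2,\ldots,t_n]$, I set $p:=h$ and $q:=h\,t_1^{\,a-b}\,\tilde{Q}$; these are nonzero Laurent polynomials in $t_1,\ldots,t_n$, and one computes $pA_0-qB_0=h\,t_1^{\,a}\,\tilde{R}$. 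Multiplying by $h$ and by a power of $t_1$ does not change the $t_1$-spread, so $pA_0-qB_0$ is either $0$ or a nonzero Laurent polynomial of $t_1$-spread $\deg_{t_1}\tilde{R}<v(B)_1$.

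To finish I would invoke the two facts already proved in Subsection \ref{LinbettiSubsecHK}. Since $L'(\bfe)$ is a submodule of the free Laurent-polynomial module (the Corollary there), $C:=pA-qB$ lies in $L'(\bfe)$. If $\tilde{R}=0$ then $C_0=0$; writing $C$ as a sum of its homogeneous components --- each lying in some $L'(\bfd)$ with $\Delta\bfd=\bfe$ and each having zero zeroth entry --- and applying part a of the Proposition in Subsection \ref{LinbettiSubsecHK} to each component, we get $C=0$, as desired. If $\tilde{R}\neq 0$ then $C_0\neq 0$, so $v(C)$ is defined and $v(C)_1$ equals the $t_1$-spread of $C_0$, which is $<v(B)_1\leq\max\{v(A),v(B)\}_1$; since the lexicographic order on $\nat^n$ compares first coordinates first, $v(C)<\max\{v(A),v(B)\}$. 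Either way the proposition holds.

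The conceptual heart --- and the only thing that is not completely routine --- is the first paragraph's observation that the lexicographic valuation lets one ignore all coordinates past the first, turning an ostensibly $n$-variable reduction into a plain division in $K[t_1]$. The remaining obstacles are minor bookkeeping: making sure $p$ and $q$ come out nonzero (guaranteed by $\tilde{Q}\neq 0$) and remembering that part a of the cited Proposition is stated for homogeneous tuples, so it must be applied componentwise to the homogeneous pieces of $pA-qB$ when its zeroth entry vanishes.
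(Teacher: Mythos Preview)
Your argument is correct and is genuinely different from the paper's. The paper proves the proposition by induction on $n$: it passes to the leading $t_1$-coefficient $A'$ of the tuple $A$ (a tuple in $t_2,\ldots,t_n$), invokes the inductive hypothesis together with Lemma~\ref{LinLemMinMax} to find $p,q\in\QQ\{t_2,\ldots,t_n\}$ making $qB'-pA'$ small, and then checks that $qB-t_1^{b_1-a_1}pA$ has valuation below the maximum. Your route sidesteps the induction entirely: since the valuation is lexicographic and its first coordinate is the $t_1$-spread of the zeroth component, a single Euclidean division in $\QQ(t_2,\ldots,t_n)[t_1]$ already forces the first coordinate to drop (or kills $C_0$, whence $C=0$ by part~a applied to homogeneous pieces). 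This is shorter and uses less of the structure developed earlier: you need only part~a of the Proposition in Subsection~\ref{LinbettiSubsecHK} and the Corollary that $L'(\bfe)$ is a submodule, whereas the paper's inductive step implicitly leans on parts~b and~c as well to control the leading-coefficient tuple. One tiny slip: the $t_1$-spread of $h\,t_1^{a}\tilde R$ is $\le \deg_{t_1}\tilde R$ rather than equal to it (since $\tilde R$ need not have nonzero constant term), but the inequality is in the direction you need, so the conclusion is unaffected.
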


\begin{proof}
If $n = 1$ then $A = (\alpha t^a, \alpha t^{a+e_1})$ and 
$B = (\beta t^{b}, \beta t^{b+e_1})$ so this clearly holds.

Suppose $n \geq 2$.  By adjusting $A$ and $B$ by units, actually 
Laurent monomials
$t^{\bfc}$, we may assume that the leading terms of the first polynomials
in $A$ and $B$ for the 
lex order are their valuations. (In Example \ref{linHKEksVal} this
amounts to replacing $s_{4,2,1}$ by $t_1^{-1} t_2^{-1}t_3^{-1}s_{4,2,1}.$)
Let 
\begin{eqnarray*}
A & = & t_1^{a_1} A^\prime + \mbox { lower terms in } t_1 \\
B & = & t_1^{b_1} B^\prime + \mbox { lower terms in } t_1,
\end{eqnarray*}
and assume $b_1 \geq a_1$.
Then the valuation of $A^\prime$ is the projection $\pi_1(v(A))$ and
the valuation of $B^\prime$ is $\pi_1(v(B))$. By induction on $n$ and
Lemma \ref{LinLemMinMax} we may find nonzero $p$ and $q$ in variables
$t_2, \ldots, t_n$ such that $q B^\prime - p A^\prime$ is zero or has
valuation less than that of both $\pi_1(v(A))$ and $\pi_1(v(B))$.
But then $q B - t_1^{b_1 - a_1} p A$ will have valuation less than
the maximum of $v(A)$ and $v(B)$.
\end{proof}

We may now finish off.
\begin{proof}[Proof of Theorem \ref{LinHKTheLprim}.]
%Part c. is immediate from a. and b., since the map of linear spaces
%is an inclusion.
Parts a. and b. follow from Proposition \ref{LinbettiProStruktur}
by letting $L$ be the Laurent polynomial ring in the variables
$t_1, \ldots, t_n$, and $v$ the valuation defined at the end of Subsection
\ref{LinbettiSubsecHK}. This is a valuation by Proposition 
\ref{LinbettiProLauval}.
\end{proof}

\section*{Acknowledgements}
I thank M.Boij and J.Weyman for discussions concerning this paper.

\bibliographystyle{mrl}
\bibliography{Bibliography}

\end{document}